\newtheorem{theorem}{Theorem}[section]
\newtheorem{lemma}[theorem]{Lemma}
\newtheorem{proposition}[theorem]{Proposition}
\newtheorem{corollary}[theorem]{Corollary}
\theoremstyle{definition}
\newtheorem{definition}[theorem]{Definition}
\newtheorem{example}[theorem]{Example}
\newtheorem{question}[theorem]{Question}
\theoremstyle{remark}
\newtheorem{remark}[theorem]{Remark}
\numberwithin{equation}{section}
\begin{document}

\title{Strong Shift Equivalence and Positive  Doubly Stochastic Matrices}
\author{Sompong Chuysurichay}
\address{Department of Mathematics and Statistics, Prince of Songkla University, Songkhla, Thailand 90110}
\email{sompong.c@psu.ac.th}
\subjclass[2010]{Primary 15B48; Secondary 37B10, 15A21, 15B51.}
\date{\today.}
\keywords{shift equivalence, spectra, stochastic matrices}

\begin{abstract}
We give sufficient conditions for a positive stochastic matrix to be similar and strong shift equivalent over $\mathbb{R}_+$ to a positive doubly stochastic matrix through matrices of the same size. We also prove that every positive stochastic matrix is strong shift equivalent over $\mathbb{R}_+$ to a positive doubly stochastic matrix. Consequently, the set of nonzero spectra of primitive stochastic matrices over $\mathbb{R}$ with positive trace and the set of nonzero spectra of positive doubly stochastic matrices over $\mathbb{R}$ are identical. We exhibit a class of $2\times 2$ matrices, pairwise strong shift equivalent over $\mathbb R_+$ through $2\times 2$ matrices, for which there is no uniform upper bound on the minimum lag of a strong shift equivalence through matrices of bounded size. In contrast, we show for any $n\times n$ primitive matrix of positive trace that the set of positive $n\times n$ matrices similar to it contains only finitely  many SSE-$\mathbb R_+$ classes.
\end{abstract}
\maketitle


\section{Introduction}
Strong shift equivalence theory (for matrices over $\mathbb Z_+$)
was introduced by R.F. Williams  \cite{williams1973classification}
as a tool for classifying shifts of finite type. Subsequently, strong shift equivalence
over other semirings has been used for classification of other symbolic dymamical systems
such as SFTs with Markov measure \cite{parry1982stochastic,marcus1991weight}
and SFTs with a free finite group action \cite{boyle2005equivariant}.

Despite its good-looking definition, strong shift equivalence is still very difficult to fully understand. Williams also introduced a more
tractable equivalence relation called shift equivalence and conjectured that shift equivalence and strong shift equivalence over
$\mathbb{Z}_+$ are the same. The conjecture was proved false by K.H. Kim and F.W. Roush in 1992 (reducible case)
\cite{kim1992williams} and 1997 (irreducible case) \cite{kim1999williams}. Although Williams' conjecture is false in
general, the gap between shift equivalence and strong shift equivalence over $\mathbb{Z}_+$ remains mysterious. We
study Williams' conjecture by relaxing the problem to the level of positive rational and real matrices, as in  \cite{S22,S24,S26,boyle2013path}.
We expect that the Williams' conjecture is true for positive rational (or real) matrices. This is the conjecture posed by Mike Boyle in \cite[Conjecture $5.1$]{boyle2008contemporary}. Understanding this relation is a natural step toward understanding strong shift equivalence over $\mathbb{Z}_+$,
and a natural matrix problem independently.

The purpose of this paper is twofold. Firstly, it gives a connection between stochastic matrices and doubly stochastic matrices via strong
shift equivalence, in both local and global aspects. In section $3$, we give sufficient conditions for an $n\times n$ positive stochastic matrix $P$ over a subfield $\mathbb{U}$ of $\mathbb{R}$ to be strong shift equivalent over $\mathbb{U}_+$ to a positive doubly stochastic matrix through matrices
of the same size. By conjugating $P$ with some involution, we show that if $P+J_n(I_n-P)$ is also positive ($J_n$ is a matrix
all of whose entries are $\frac{1}{n}$) then $P$ is similar and strong shift equivalent over $\mathbb{U}_+$ to a doubly stochastic
matrix through matrices of the same size.  It is not true, however, that every positive stochastic matrix
is strong shift equivalent over $\mathbb{R}_+$ to a doubly stochastic matrix of the same size: a counterexample was found by Johnson
\cite{johnson1981row}. In section $4$, we prove that any positive stochastic matrix is strong shift equivalent over $\mathbb{U}_+$ to a
positive doubly stochastic matrix, assuming $\mathbb{U}$ is a subring of $\mathbb{R}$ containing $\mathbb{Q}$.  Consequently, the set of
nonzero spectra of positive doubly stochastic matrices over a subfield $\mathbb{U}$ of $\mathbb{R}$ and the set of nonzero spectra of
primitive stochastic matrices over $\mathbb{U}$ with positive trace coincide. We do not know whether the result can be extended to
irreducible matrices.

The second aim of the paper is to give results and counterexamples for natural finiteness questions involving strong shift equivalence over
$\mathbb R_+$. We prove that strong shift equivalence over $\mathbb{R}_+$ of irreducible stochastic matrices can be studied using
generalized stochastic matrices in section $5$. We give  a family of $2\times 2$ matrices, pairwise strong shift equivalent over $\mathbb{R}_+$, such that there is no uniform bound on the minimum lag of a strong shift equivalence over $\mathbb{R}_+$ through matrices of bounded size between members of the family. In contrast, we prove that the collection of positive $n\times n$ matrices similar over $\mathbb{R}$ to any primitive $n\times n$ matrix of positive trace over $\mathbb{R}$ contains only finitely many SSE$-\mathbb{R}_+$ classes.
\section{Definitions and Background}
Let $A=(a_{ij})$ be a real $m\times n$ matrix. $A$ is \emph{nonnegative} if $a_{ij}\geq 0$ for all $i,j\in\{1,2,\ldots,n\}$, and $A$ is said to be \emph{positive} if $a_{ij}>0$ for all $i,j\in\{1,2,\ldots,n\}$. $A$ is \emph{irreducible} if $A$ is nonnegative, square, and for any $(i,j)$ there is some $k\in\mathbb{N}$ such that $(A^k)_{ij}>0$. $A$ is \emph{primitive} if $A$ is nonnegative, square, and there is some $r\in\mathbb{N}$ such that $A^r$ is positive. $A$ is \emph{stochastic} if it is nonnegative, square, and every row sum of $A$ is $1$. $A$ is \emph{doubly stochastic} if  $A$ and $A^T$ are both stochastic. We denote the identity matrix by $I_n$.

Let $\Delta^{n-1}_+$ denote the set of row vectors
$l=(l_1,l_2,\ldots,l_n)$ where $l$ is positive and $\sum_{k=1}^n l_k=1$. A
vector $l\in\Delta^{n-1}_+$   is called the \emph{left Perron
  eigenvector} of an $n\times n$ stochastic matrix $P$ if $lP=l$.  Let
$j_n$ denote the row vector in $\mathbb{R}^n$ whose all entries are $1$. For any $l\in \Delta^{n-1}_+$, let $J_l$ be an $n\times n$ matrix whose the $(i,j)$th entry is $l_j$. If $l=\frac{1}{n}j_n$, $J_l$ is also denoted by $J_n$.  If $\mathbb{U}$ is a subring of $\mathbb{R}$ we denote by $\mathbb{U}_+$ the subsemiring of $\mathbb{U}$ consisting of all nonnegative elements in $\mathbb{U}$.

\begin{definition}
  Let $A$ and $B$ be square matrices over a semiring $\mathcal{R}$ containing $0$ and $1$ as the additive and multiplicative identities.
  \begin{enumerate}
    \item $A$ is \emph{elementary strong shift equivalent over $\mathcal{R}$} (ESSE$-\mathcal{R}$) to $B$ if there exist matrices $U,V$ over $\mathcal{R}$ with $A=UV,B=VU$.
    \item $A$ is \emph{strong shift equivalent over $\mathcal{R}$} (SSE$-\mathcal{R}$) to $B$ if there exists a finite sequence of matrices over $\mathcal{R}$, $A=A_0,A_1,\ldots,A_l=B$, such that $A_i$ is ESSE$-\mathcal{R}$ to $A_{i+1}$ for all $i\in\{0,1,\ldots,l-1\}$. Such a finite sequence is a \emph{strong shift equivalence} over $\mathcal{R}$.  The number $l$ is the \emph{lag} of the strong shift equivalence. By the \emph{size} of the strong shift equivalence, we mean $\max\{n_i\mid 0\leq i\leq l, A_i\,\,\text{is}\,\,n_i\times n_i\}$.
    \item $A$ is \emph{shift equivalent over $\mathcal{R}$}
      (SE$-\mathcal{R}$) to $B$ if there exist matrices $U,V$ over
      $\mathcal{R}$ and $l\in\mathbb{N}$ such that $A^l=UV,B^l=VU$ and
      $AU=UB, VA=BV$. The number $l$ is the \emph{lag} of the shift equivalence.
  \end{enumerate}
\end{definition}

For any semiring $\mathcal{R}$, SSE$-\mathcal{R}$ and SE$-\mathcal{R}$
are equivalence relations whereas ESSE$-\mathcal{R}$ is not transitive. For all the semirings $\mathcal{R}$ under our
consideration the implication cannot be reversed.
If $(U_i,V_i)_{i=1}^{\ell}$ is a lag $\ell$  SSE$-\mathcal{R}$ from $A$ to $B$,
then $(U,V)=(U_1U_2\cdots U_{\ell},V_{\ell}V_{\ell-1}\cdots V_1)$ is a lag
$\ell$ SE$-\mathcal R$ from $A$ to $B$.

Next, we recall some basic constructions from \cite{boyle2013path}.

\begin{definition}
  An \emph{amalgamation matrix} is a matrix with entries from $\{0,1\}$ such that every row has exactly one $1$ and every column has at least one $1$. A \emph{subdivision matrix} is the transpose of an amalgamation matrix.
\end{definition}

\begin{definition}
  An \emph{elementary row splitting} is an elementary strong shift equivalence $A=UX,C=XU$ in which $U$ is a subdivision matrix. In this case, $C$ is an \emph{elementary row splitting of $A$}, and $A$ is an \emph{elementary row amalgamation of $C$}.
  \end{definition}

  \begin{definition}
  An \emph{elementary column splitting} is an elementary strong shift equivalence $A=XV,C=VX$ in which $V$ is an amalgamation matrix. In this case, $C$ is an \emph{elementary column splitting of $A$}, and $A$ is an \emph{elementary column amalgamation of $C$}.
  \end{definition}

A special case of an elementary column splitting is the following example.
  \begin{example}
    Let $A$ be an $n\times n$ nonnegative matrix. Let $X$ be an $n\times (n+1)$ matrix obtained by splitting column $i$ of $A$ into columns $i$ and $i+1$ and the other columns of $A$ and $X$ are the same. We duplicate row $i$ of $X$ and form an $(n+1)\times (n+1)$ matrix $C$. Let $V$ be an $(n+1)\times n$ matrix obtained by duplicating row $i$ of $I_n$. Then $A=XV$ and $C=VX$. Thus $C$ is an elementary column splitting of $A$.
  \end{example}
  We will only use elementary column splittings of this type in section $4$.

  \begin{definition}
Let $A$ be a square matrix. The \emph{Jordan form away from zero of $A$}, $J^{\times}(A)$, is the matrix obtained by removing from the Jordan form of $A$ all rows and columns with zeros on the main diagonal. The \emph{nonzero spectrum} of $A$ is the set of all nonzero eigenvalues of $A$.
\end{definition}

It is known that SE$-\mathbb{R}_+$ preserves Jordan forms away from zero and so does SSE$-\mathbb{R}_+$. Hence SE$-\mathbb{R}_+$ and SSE$-\mathbb{R}_+$ also preserve nonzero spectra of matrices over $\mathbb{R}$. A proof can be found in \cite[Theorem 7.4.6]{LM1995}.

\begin{definition}
  Let $A$ be an irreducible matrix with spectral radius $\lambda$. The \emph{stochasticization} of $A$ is the stochastic matrix  $\mathcal{S}(A)=\frac{1}{\lambda}D^{-1}AD$, where $D$ is the diagonal matrix whose vector of diagonal entries is the right eigenvector of $A$ corresponding to the eigenvalue $\lambda$.
\end{definition}
\section{Strong Shift Equivalence within the Same Size}

We assume throughout this section that $\mathbb{U}$ is a subfield of
$\mathbb{R}$. We give sufficient conditions for positive stochastic
matrices over $\mathbb{U}$  to be similar and SSE$-\mathbb{U}_+$ to a
positive doubly stochastic matrix of the same size. We will use the
following theorem of Kim and Roush \cite{S24} (see \cite{boyle2013path} for
a thorough exposition and generalizations).

\begin{theorem}\label{theorem:1}
Let $A$ and $B$ be positive $n\times n$ matrices over $\mathbb{U}$. Suppose $A$ and $B$ are similar over $\mathbb{U}$ and  $(A_t)_{0\leq t\leq 1}$ is a path of positive, real $n\times n$ matrices, all in the same similarity class over $\mathbb{R}$, from $A=A_0$ to $B=A_1$. Then $A$ and $B$ are SSE$-\mathbb{U}_+$ through matrices of the same size.
\end{theorem}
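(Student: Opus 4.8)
The plan is to reduce the statement to a chain of elementary moves, each realizing a conjugation by a single elementary matrix, and to use the hypothesized positive path to keep every matrix in the chain both positive and defined over $\mathbb{U}$. The engine is the following elementary construction: if $A>0$ is over $\mathbb{U}$ and $S\in GL_n(\mathbb{U})$, then $(U,V)=(S,S^{-1}A)$ gives $UV=A$ and $VU=S^{-1}AS$, and $(U,V)=(AS,S^{-1})$ does the same; the first is an ESSE$-\mathbb{U}_+$ once $S\geq 0$ and $S^{-1}A\geq 0$, and the second once $AS\geq 0$ and $S^{-1}\geq 0$. For a positive diagonal $D$ or a permutation $\Pi$ over $\mathbb{U}$ the relevant nonnegativities are automatic, so conjugation by such a matrix is always an ESSE$-\mathbb{U}_+$. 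For a shear $S=I+cE_{ij}$ with $i\neq j$ and $c\in\mathbb{U}$ one has $S^{-1}=I-cE_{ij}$; if $c>0$ is small enough that $S^{-1}A=A-cE_{ij}A$ stays positive I use the first factorization, and if $c<0$ is small enough that $AS=A+cAE_{ij}$ stays positive I use the second. Thus conjugation by any sufficiently small shear over $\mathbb{U}$ is an ESSE$-\mathbb{U}_+$, and all of these moves keep the size at $n\times n$.

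Next I would globalize along the path. Lifting the continuous path $(A_t)$ through the submersive conjugation map $G\mapsto G^{-1}AG$ yields a continuous family $G_t\in GL_n(\mathbb{R})$ with $A_t=G_t^{-1}AG_t$ and $G_0=I$. Since $\mathbb{U}$ is a subfield of $\mathbb{R}$ it is dense, so $GL_n(\mathbb{U})$ is dense in $GL_n(\mathbb{R})$; I fix a fine partition $0=t_0<\cdots<t_m=1$ and choose $\mathbb{U}$-matrices $\widehat G_k$ close to $G_{t_k}$, with $\widehat G_0=I$ and with $\widehat G_m$ a $\mathbb{U}$-conjugator carrying $A$ to $B$ lying near $G_1$ (such a point exists because the $\mathbb{U}$-points of the centralizer of $A$ are dense in its real centralizer, the latter being the $\mathbb{R}$-solution space of the $\mathbb{U}$-linear system $XA=AX$). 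Then $\widetilde A_k:=\widehat G_k^{-1}A\widehat G_k$ are matrices over $\mathbb{U}$, each close to the positive matrix $A_{t_k}$ and hence itself positive, with $\widetilde A_0=A$ and $\widetilde A_m=B$. The consecutive connectors $S_k=\widehat G_{k-1}^{-1}\widehat G_k\in GL_n(\mathbb{U})$ satisfy $\widetilde A_k=S_k^{-1}\widetilde A_{k-1}S_k$ and, being products of near-identity matrices, are close to $I$. A near-identity matrix in $GL_n(\mathbb{U})$ factors by pivot-free Gaussian elimination as a product of shears and one positive diagonal, all over $\mathbb{U}$ with small parameters; applying the elementary construction to each factor and concatenating over $k$ produces an SSE$-\mathbb{U}_+$ from $A$ to $B$ through $n\times n$ matrices.

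The main difficulty will be the uniform positivity control that makes this assembly legitimate, since it is positivity, not mere similarity, that must survive at every intermediate matrix. I would handle it by compactness: along the compact path the $A_t$ are bounded below entrywise by some $\delta>0$, so there is a single scale within which the approximation error $\widehat G_k\approx G_{t_k}$, the partition mesh, and the shear parameters in each factorization can all be taken small enough that every partial conjugate $(\widehat G_{k-1}E_1\cdots E_r)^{-1}A(\widehat G_{k-1}E_1\cdots E_r)$ remains positive. The two points needing genuine care are the construction of the continuous real lift $G_t$ (local sections of the submersion exist and are patched over the contractible interval $[0,1]$, using that the centralizer of $A$ is a Lie group) and the fact that each connector $S_k$ is simultaneously rational over $\mathbb{U}$ and close to $I$; the latter is exactly what choosing a common base point $A$ together with $\mathbb{U}$-approximants of a single continuous lift secures, so that $S_k=\widehat G_{k-1}^{-1}\widehat G_k$ is automatically a product of near-identity $\mathbb{U}$-matrices. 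Once these uniform estimates are in place, the chain of ESSE$-\mathbb{U}_+$ moves delivers the theorem.
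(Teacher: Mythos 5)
Your proposal addresses a statement that the paper itself never proves: Theorem~\ref{theorem:1} is imported as a theorem of Kim and Roush \cite{S24}, with \cite{boyle2013path} cited for a full exposition, so the comparison here is with that source rather than with any argument in the paper. What you have written is, in outline, a correct reconstruction of exactly that ``path methods'' argument. Your elementary engine is the standard one: for a shear $S=I+cE_{ij}$, the factorization $(U,V)=(S,S^{-1}A)$ when $c>0$ and $(U,V)=(AS,S^{-1})$ when $c<0$ realizes the conjugation $A\mapsto S^{-1}AS$ as an ESSE$-\mathbb{U}_+$ for $\lvert c\rvert$ small relative to the positivity margin of $A$, and conjugation by positive diagonals costs nothing. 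Your global assembly --- lift the path through the orbit map $G\mapsto G^{-1}AG$ (legitimate, since the similarity orbit is a locally closed orbit of an algebraic group action, so the orbit map is a submersion with local sections), partition $[0,1]$ finely, replace the lift by $\mathbb{U}$-rational approximants $\widehat G_k$, factor the near-identity connectors $S_k=\widehat G_{k-1}^{-1}\widehat G_k$ by pivot-free LDU into small shears and a positive diagonal over $\mathbb{U}$, and use compactness to get a uniform entrywise lower bound $\delta>0$ and norm upper bound along the path --- is the same compactness-plus-density scheme as in the source. The two points you flag as delicate are the right ones, and your treatment of both is sound; in particular the endpoint correction works because similarity of $A$ and $B$ over $\mathbb{U}$ gives a coset description $\{G:G^{-1}AG=B\}=Z_{\mathbb{R}}(A)G_*$ with $G_*\in GL_n(\mathbb{U})$, and the invertible $\mathbb{U}$-points of the centralizer are dense in the invertible real ones since the centralizer is a linear space cut out by equations over $\mathbb{U}$ and invertibility is an open condition. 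One bookkeeping item to make explicit when writing this up: at each shear step the nonnegativity to be checked is both that of the intermediate conjugate and that of the factor $S^{-1}A'$ (resp. $A'S$), where $A'$ is the current partial conjugate; both are controlled by the same two uniform constants from compactness, and the total number of elementary steps is finite and determined before the smallness parameters are chosen, so there is no circularity and the argument closes.
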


We begin with the following lemma.
\begin{lemma}\label{lemma:1}
  Let $P$ be an $n\times n$ positive stochastic matrix. Then $$P+J_v(I_n-P)$$ is similar to $P$ for all $v\in\Delta^{n-1}_+$.
\begin{proof}
Let $l$ be the left Perron eigenvector of $P$. It is straightforward to verify that
$$
J_lP=PJ_l=J_l,\,\, PJ_v=J_v,\,\, J_v^2=J_v,\,\, \text{ and  } J_vJ_l=J_l.
$$
Define $X=I_n-J_l-J_v$. A computation shows that $X^2=I_n$. Thus, using the above relations,
$$
XPX^{-1}=(P-J_l-J_vP)X = P+J_v(I_n-P).
$$
\end{proof}
\end{lemma}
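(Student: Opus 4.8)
The plan is to produce an explicit invertible matrix $X$ over $\mathbb{U}$ that conjugates $P$ to $P+J_v(I_n-P)$. The natural first move is to record the elementary algebra of the rank-one matrices in play. Writing $j_n^T$ for the column of ones, one has $J_l=j_n^T l$ and $J_v=j_n^T v$, and since $P$ is stochastic with left Perron eigenvector $l$ we have $Pj_n^T=j_n^T$ and $lP=l$; combined with $lj_n^T=vj_n^T=1$, these outer-product identities immediately yield $J_lP=PJ_l=J_l$, $PJ_v=J_v$, $J_v^2=J_v$, $J_lJ_v=J_v$, and $J_vJ_l=J_l$. I would assemble exactly this list first, since every subsequent step reduces to substituting one of these relations.

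The creative step is guessing the conjugating matrix. It helps to view the target $P+J_v(I_n-P)$ as the matrix obtained from $P$ by moving its left Perron eigenvector from $l$ to $v$: indeed $v\bigl(P+J_v(I_n-P)\bigr)=v$, so $v$ is the left fixed vector of the new matrix. I am therefore looking for a similarity that interchanges the roles of $l$ and $v$ while respecting the common right eigenvector direction $j_n^T$. The symmetric ansatz $X=I_n-J_l-J_v$ is the obvious candidate. I would then verify it is an involution: expanding $X^2$ and collecting terms via the idempotency and cross relations $J_l^2=J_l$, $J_v^2=J_v$, $J_lJ_v=J_v$, $J_vJ_l=J_l$ makes the off-diagonal contributions cancel in pairs, leaving $X^2=I_n$. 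Hence $X$ is invertible with $X^{-1}=X$, and its entries lie in $\mathbb{U}$.

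Finally I would compute $XPX^{-1}=XPX$ directly. From $XP=P-J_l-J_vP$ (using $J_lP=J_l$), multiplying on the right by $X$ and feeding in $PJ_l=J_l$, $PJ_v=J_v$, $J_lJ_v=J_v$, and $J_vJ_l=J_l$ causes the $J_l$-terms to cancel and leaves $P+J_v-J_vP=P+J_v(I_n-P)$, which is the claim.

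I expect the only genuine obstacle to be finding $X$; once the ansatz $X=I_n-J_l-J_v$ is in hand, both $X^2=I_n$ and the conjugation identity are routine substitutions. A useful check that guides the guess is that any such $X$ must satisfy $Xj_n^T=-j_n^T$ and $lX=-v$, which pins down its action on the relevant one-dimensional subspaces and makes the symmetric form of $X$ essentially forced.
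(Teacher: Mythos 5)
Your proposal is correct and follows essentially the same route as the paper: the same involution ansatz $X=I_n-J_l-J_v$, the same rank-one identities, and the same conjugation computation yielding $P+J_v(I_n-P)$. The extra motivation you give (the outer-product viewpoint $J_l=j_n^Tl$, $J_v=j_n^Tv$ and the constraints $Xj_n^T=-j_n^T$, $lX=-v$ that force the ansatz) is a nice addition but does not change the argument.
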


We now in a position to prove the main result of this section.

\begin{theorem}\label{theorem:185}
  Let $P$ be an $n\times n$ positive stochastic matrix over $\mathbb{U}$. If $P+J_n(I_n-P)$ is positive then $P$ is similar and SSE$-\mathbb{U}_+$ to a positive doubly stochastic matrix through matrices of the same size.
  \begin{proof}
Let $v=\frac{1}{n}j_n$ and $Q=P+J_n(I_n-P)$. Then $Q$ is similar to $P$. A computation shows that $vQ=v$. Thus $Q$ is positive and doubly stochastic. Next, we show that $Q$ is SSE$-\mathbb{U}_+$ to $P$. Let $l$ be the left Perron eigenvector of $P$. Define $l(t)=(1-t)l+tv$ and $P_t=P+J_{l(t)}(I_n-P)$ for all $t\in [0,1]$. Using the fact that $J_lP=J_l$ and $J_{l(t)}=(1-t)J_l+tJ_n$, we get $P_t=(1-t)P+tQ$ for all $t\in [0,1]$. Hence $P_t$ is positive for all $t\in [0,1]$. By Theorem \ref{theorem:1}, $P$ and $Q$ are SSE$-\mathbb{U}_+$ through matrices of the same size.
\end{proof}
\end{theorem}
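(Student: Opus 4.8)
The plan is to take as the target doubly stochastic matrix the very expression occurring in the hypothesis, $Q = P + J_n(I_n - P)$, and then to exhibit a single positive matrix path joining $P$ to $Q$ inside one $\mathbb{R}$-similarity class, so that Theorem~\ref{theorem:1} applies directly. So the whole argument reduces to three checks: that $Q$ is positive doubly stochastic over $\mathbb{U}$, that $P$ and $Q$ are similar over $\mathbb{U}$, and that the two are connected by a positive path within their common similarity class.

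First I would verify that $Q$ deserves its name. It is positive by hypothesis, and its entries lie in $\mathbb{U}$ since $\frac1n\in\mathbb{U}$. For the row sums, because $P$ is stochastic, $j_n^T$ is a right $1$-eigenvector and $(I_n-P)j_n^T=0$, whence $Qj_n^T=j_n^T$, so $Q$ is stochastic. For the columns, writing $v=\frac1n j_n$ and using $vJ_n=v$, I would compute $vQ = vP + v(I_n-P)=v$, which says $Q$ has all column sums $1$; hence $Q$ is positive doubly stochastic. For similarity I would invoke Lemma~\ref{lemma:1} with $v=\frac1n j_n$, which already gives $Q\sim P$ via the involution $X=I_n-J_l-J_n$, where $l$ is the left Perron eigenvector of $P$. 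The point worth emphasizing is that this similarity is defined over $\mathbb{U}$: since $P$ is positive it is primitive, so its $1$-eigenspace is one dimensional, and the normalized solution of $l(P-I_n)=0$ is obtained by Gaussian elimination over the field $\mathbb{U}$, so $l\in\mathbb{U}^n$ and therefore $X$ and $Q=XPX^{-1}$ are defined over $\mathbb{U}$.

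The heart of the proof is the construction of the path, and here the decisive idea is to interpolate the eigenvector rather than the matrix. I would set $l(t)=(1-t)l+tv$ and $P_t=P+J_{l(t)}(I_n-P)$ for $t\in[0,1]$. Each $l(t)$ is positive with entries summing to $1$, so $l(t)\in\Delta^{n-1}_+$, and Lemma~\ref{lemma:1} then guarantees that every $P_t$ is similar to $P$ over $\mathbb{R}$; thus the whole path lies in a single similarity class, with $P_0=P$ and $P_1=Q$. The key simplification, which I expect to be the genuinely crucial step, is that this path collapses to a straight segment: since $J_{l(t)}=(1-t)J_l+tJ_n$ and $J_lP=J_l$, the term $(1-t)J_l(I_n-P)$ vanishes and $P_t=(1-t)P+tQ$. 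Being a convex combination of the positive matrices $P$ and $Q$, each $P_t$ is positive, so positivity of the path comes essentially for free.

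Finally, with $A=P$ and $B=Q$ positive $n\times n$ matrices over $\mathbb{U}$ that are similar over $\mathbb{U}$ and joined by the positive path $(P_t)_{0\le t\le1}$ inside one $\mathbb{R}$-similarity class, Theorem~\ref{theorem:1} yields that $P$ and $Q$ are SSE$-\mathbb{U}_+$ through matrices of the same size, which completes the proof. The only delicate points are the $\mathbb{U}$-rationality of the Perron eigenvector and the verification that the interpolated family never leaves the similarity class; both dissolve once one interpolates $l$ and uses $J_lP=J_l$, after which the convexity collapse $P_t=(1-t)P+tQ$ delivers positivity immediately.
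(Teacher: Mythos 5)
Your proposal is correct and follows essentially the same route as the paper: the same target $Q=P+J_n(I_n-P)$, the same eigenvector interpolation $l(t)=(1-t)l+tv$ with the collapse $P_t=(1-t)P+tQ$ via $J_lP=J_l$, and the same appeal to Theorem~\ref{theorem:1}. The only difference is that you spell out details the paper leaves implicit, such as the $\mathbb{U}$-rationality of the Perron eigenvector and the verification that each $P_t$ stays in the $\mathbb{R}$-similarity class via Lemma~\ref{lemma:1}.
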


\begin{corollary}
  Every  positive $2\times 2$  stochastic matrix over $\mathbb{U}$ is similar and SSE$-\mathbb{U}_+$ to a  positive doubly stochastic matrix through matrices of the same size.
  \begin{proof}
 Let $P=\left(
          \begin{array}{cc}
            a & 1-a \\
            1-b & b \\
          \end{array}
        \right)
 $ where $a,b\in(0,1)$. Then
 $$
 P+J_2(I_2-P) =\dfrac{1}{2} \left(
                  \begin{array}{cc}
                    a+b & 2-a-b \\
                    2-a-b & a+b \\
                  \end{array}
                \right)
 $$
 is positive and hence the result follows from Theorem \ref{theorem:185}.
  \end{proof}
\end{corollary}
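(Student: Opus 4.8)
The plan is to apply Theorem~\ref{theorem:185} directly, so the entire task reduces to verifying that its single hypothesis---positivity of $P+J_2(I_2-P)$---holds automatically for every positive $2\times 2$ stochastic matrix. First I would write an arbitrary such $P$ in the standard parametrized form
\[
P=\begin{pmatrix} a & 1-a \\ 1-b & b \end{pmatrix},
\]
observing that positivity of all four entries is precisely the condition $a,b\in(0,1)$.

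Next I would carry out the short computation of $P+J_2(I_2-P)$, using $J_2=\tfrac12\begin{pmatrix}1 & 1\\ 1 & 1\end{pmatrix}$. Since each row of $J_2(I_2-P)$ is the average of the rows of $I_2-P$, the diagonal and off-diagonal contributions collapse to $\pm\tfrac{b-a}{2}$, and after combining with $P$ one obtains
\[
P+J_2(I_2-P)=\frac12\begin{pmatrix} a+b & 2-a-b \\ 2-a-b & a+b \end{pmatrix}.
\]

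Finally I would note that $a,b\in(0,1)$ forces $0<a+b<2$, so both $a+b$ and $2-a-b$ are strictly positive; hence every entry of $P+J_2(I_2-P)$ is positive. Theorem~\ref{theorem:185} then yields that $P$ is similar and SSE$-\mathbb{U}_+$ to a positive doubly stochastic matrix through matrices of the same size, as required.

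I do not expect any genuine obstacle here: the substantive content is already packaged inside Theorem~\ref{theorem:185}, and the only feature special to dimension $2$ is that its hypothesis reduces to the trivially true inequality $0<a+b<2$. The one point worth confirming is that no extra constraint on $a,b$ sneaks in---that is, that positivity of $P$ and positivity of $P+J_2(I_2-P)$ are governed by the same open square $(0,1)^2$---which the explicit form above makes manifest.
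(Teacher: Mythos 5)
Your proposal is correct and follows essentially the same route as the paper: both parametrize $P$ by $a,b\in(0,1)$, compute $P+J_2(I_2-P)=\tfrac12\left(\begin{smallmatrix} a+b & 2-a-b \\ 2-a-b & a+b \end{smallmatrix}\right)$, observe positivity from $0<a+b<2$, and invoke Theorem~\ref{theorem:185}. There is nothing to add; the verification is complete.
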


\begin{corollary}
  Every positive stochastic matrix of rank one is similar and SSE$-\mathbb{R}_+$ to a positive doubly stochastic matrix through matrices of the same size.
\end{corollary}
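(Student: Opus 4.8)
The plan is to reduce the statement to a direct application of Theorem \ref{theorem:185} with $\mathbb{U}=\mathbb{R}$. First I would unpack what a positive rank-one stochastic matrix looks like. If $P$ is rank one, then $P=\mathbf{c}\,\mathbf{r}$ for a column vector $\mathbf{c}$ and a row vector $\mathbf{r}$, so its $(i,j)$ entry is $c_ir_j$. Requiring every row sum to equal $1$ forces $c_i\bigl(\sum_j r_j\bigr)=1$ for every $i$, hence all the $c_i$ are equal; after normalizing, every row of $P$ is one common positive probability vector $l=(l_1,\ldots,l_n)$. In the notation of the paper this says precisely that $P=J_l$ for some $l\in\Delta^{n-1}_+$.

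The key computation is then to verify the hypothesis of Theorem \ref{theorem:185}, namely that $P+J_n(I_n-P)$ is positive. Using the relation $J_nJ_l=J_l$ (the special case $v=\tfrac1n j_n$ of the identities established in the proof of Lemma \ref{lemma:1}), I would compute
$$
P+J_n(I_n-P)=J_l+J_n-J_nJ_l=J_l+J_n-J_l=J_n,
$$
which is positive. In fact this already identifies the target doubly stochastic matrix as $J_n$ itself.

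With this in hand the corollary follows immediately: since $P=J_l$ is a positive stochastic matrix over $\mathbb{R}$ and $P+J_n(I_n-P)=J_n$ is positive, Theorem \ref{theorem:185} (applied with $\mathbb{U}=\mathbb{R}$) yields that $P$ is similar and SSE$-\mathbb{R}_+$ to the positive doubly stochastic matrix $J_n$ through matrices of the same size. There is no genuine obstacle here: the only content is the structural observation that a positive rank-one stochastic matrix must be of the form $J_l$, after which the positivity hypothesis of Theorem \ref{theorem:185} is automatic because $P+J_n(I_n-P)$ collapses to $J_n$.
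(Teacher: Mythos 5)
Your proposal is correct and follows essentially the same route as the paper: identify a positive rank-one stochastic matrix as $J_l$ for some $l\in\Delta^{n-1}_+$, observe that $J_l+J_n(I_n-J_l)=J_n$ is positive, and invoke Theorem \ref{theorem:185}. The extra details you supply (the outer-product argument for why $P=J_l$, and the identity $J_nJ_l=J_l$) are exactly the steps the paper leaves implicit.
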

\begin{proof}
Any positive $n\times n$ stochastic matrix of rank one is of the form $J_l$ for some $l\in\Delta_+^{n-1}$. Since $J_l+J_n(I_n-J_l)=J_n$, $J_l$ is similar and SSE$-\mathbb{R}_+$ to $J_n$ by Theorem \ref{theorem:185}.
\end{proof}

\begin{remark}
  For a positive $n\times n$ stochastic matrix $P=(p_{ij})$, the following conditions are equivalent:
  \begin{enumerate}
    \item $P+J_n(I_n-P)$ is positive.
    \item $\displaystyle \sum_{k=1}^np_{kj}<1+np_{ij}$ for all $i,j\in\{1,2,\ldots,n\}$.
     \item $\displaystyle \sum_{i=1}^np_{ij}<1+n\min_{1\leq i\leq n} p_{ij}$ for all $j\in\{1,2,\ldots,n\}$.
  \end{enumerate}
  We will use the third condition to prove the following results.
\end{remark}

\begin{corollary}\label{theorem:186}
  Let $P=(p_{ij})$ be an $n\times n$ positive stochastic matrix over $\mathbb{U}$ where $n\geq 2$. Suppose that one of the following conditions holds:
\begin{enumerate}
\item $\displaystyle \max_{1\leq i\leq n}p_{ij}-\min_{1\leq i\leq n}p_{ij}<\dfrac{1}{n-1}$ for all $j\in\{1,2,\ldots,n\}$.

\item $\displaystyle \max_{1\leq i,j\leq n}p_{ij}-\min_{1\leq i,j\leq n}p_{ij}<\dfrac{1}{n}$.

\item $\displaystyle \min_{1\leq i,j\leq n} p_{ij}> \dfrac{1}{n}-\dfrac{1}{n^2}$.
\end{enumerate}
Then $P$ is similar and SSE$-\mathbb{U}_+$ to a positive doubly stochastic matrix through matrices of the same size.
\begin{proof}\hspace{1cm}
\begin{enumerate}
 \item For each $j\in\{1,2,\ldots,n\}$, we have
    \begin{align*}
      \sum_{i=1}^np_{ij} &= \min_{1\leq i\leq n}p_{ij} + \sum_{i=1}^np_{ij}-\min_{1\leq i\leq n}p_{ij} \\
             &\leq  \min_{1\leq i\leq n}p_{ij} + (n-1) \max_{1\leq i\leq n}p_{ij} \\
             &< \min_{1\leq i\leq n}p_{ij} +1+ (n-1) \min_{1\leq i\leq n}p_{ij} \\
             &=1+n\min_{1\leq i\leq n}p_{ij}.
    \end{align*}

\item   For each $j\in\{1,2,\ldots,n\}$, we have
    \begin{align*}
      \sum_{i=1}^np_{ij} &\leq  n\max_{1\leq i,k\leq n}p_{ik} \\
             &< 1+n\min_{1\leq i,k\leq n}p_{ik}\\
             &\leq 1+n\min_{1\leq i\leq n}p_{ij}.
    \end{align*}

 \item  If $p_{ij}\geq\frac{2}{n}-\frac{1}{n^2}$ for some $i,j\in\{1,2,\ldots,n\}$ then
    \begin{align*}
      \sum_{k=1}^n p_{ik} &> (n-1)\left(\dfrac{1}{n}-\dfrac{1}{n^2}\right)+ p_{ij} \\
             &\geq \dfrac{(n-1)^2}{n^2}+\dfrac{2n-1}{n^2} \\
             &= 1
    \end{align*}
which is a contradiction. Thus $\frac{1}{n}-\frac{1}{n^2}< p_{ij}<\frac{2}{n}-\frac{1}{n^2}$ for all $i,j\in\{1,2,\ldots,n\}$. For any $j\in\{1,2,\ldots,n\}$, we have
\begin{align*}
 \sum_{i=1}^n p_{ij}  &< n\left(\dfrac{2}{n}-\dfrac{1}{n^2}\right) \\
                                       &= 1+n\left(\dfrac{1}{n}-\dfrac{1}{n^2}\right) \\
                                       &<1+n\min_{1\leq i\leq n}p_{ij}.
 \end{align*}
\end{enumerate}
\end{proof}
\end{corollary}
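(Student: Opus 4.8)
The plan is to reduce all three hypotheses to the single sufficient condition for positivity of $P + J_n(I_n - P)$ recorded in the preceding remark, namely that
$$\sum_{i=1}^n p_{ij} < 1 + n\min_{1 \le i \le n} p_{ij} \quad \text{for every } j \in \{1,2,\ldots,n\},$$
and then invoke Theorem \ref{theorem:185}. Since $P$ is only row-stochastic, the column sums need not equal $1$, so in each case the task is precisely to show that every column sum is controlled by the column minimum in the sharp way displayed above.

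For part (1) I would fix a column $j$ and split its sum as the minimum entry plus the remaining $n-1$ entries. Bounding each of those $n-1$ entries by the column maximum gives $\sum_i p_{ij} \le \min_i p_{ij} + (n-1)\max_i p_{ij}$, and the hypothesis $\max_i p_{ij} - \min_i p_{ij} < \tfrac{1}{n-1}$ lets me replace $(n-1)\max_i p_{ij}$ by $(n-1)\min_i p_{ij} + 1$, yielding the desired inequality. Part (2) is even more direct: the global spread bound makes every column sum at most $n\max_{i,k} p_{ik}$, which is strictly below $1 + n\min_{i,k} p_{ik}$ and hence below $1 + n\min_i p_{ij}$.

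Part (3) is the one requiring a little more thought, since the hypothesis provides only a lower bound on the entries whereas the target inequality needs the column sums to be small. The idea is to exploit the row-stochasticity to convert the uniform lower bound into a uniform upper bound: if some entry were as large as $\tfrac{2}{n} - \tfrac{1}{n^2}$, then together with the other $n-1$ entries in its row (each exceeding $\tfrac1n - \tfrac1{n^2}$ by hypothesis) the row sum would exceed $1$, contradicting stochasticity. This pins every entry into the interval $\bigl(\tfrac1n - \tfrac1{n^2},\, \tfrac2n - \tfrac1{n^2}\bigr)$, after which the upper end of the interval gives $\sum_i p_{ij} < n\bigl(\tfrac2n - \tfrac1{n^2}\bigr) = 1 + n\bigl(\tfrac1n - \tfrac1{n^2}\bigr)$, and the lower bound on the entries completes the estimate.

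I do not expect any serious obstacle here: once the reduction to the remark's inequality is in place, each case is a short estimate. The only point demanding care is part (3), where one must argue indirectly---extracting an upper bound on the entries from the stochastic normalization---rather than estimating the column sums head-on.
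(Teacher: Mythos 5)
Your proposal is correct and follows essentially the same route as the paper's proof: all three cases are reduced to the remark's column-sum criterion $\sum_i p_{ij} < 1 + n\min_i p_{ij}$ and then Theorem \ref{theorem:185} is invoked, with part (3) handled by the same indirect argument (an entry of size at least $\tfrac{2}{n}-\tfrac{1}{n^2}$ would force a row sum exceeding $1$, so all entries lie in $\bigl(\tfrac1n - \tfrac1{n^2},\, \tfrac2n - \tfrac1{n^2}\bigr)$). The estimates in parts (1) and (2) also match the paper's line by line.
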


\begin{remark}
  For similarity, the third condition in Corollary \ref{theorem:186} improves the sufficient condition $p_{ij}>\frac{1}{n+1}$ for all $i,j\in\{1,2,\ldots, n\}$ observed by Johnson in \cite[Observation 4]{johnson1981row}.
\end{remark}

\begin{lemma} \label{lemma:2}
 $P$ is strong shift equivalent to $Q$ iff $P^T$ is strong shift equivalent to $Q^T$.
 \begin{proof}
    $P$ is strong shift equivalent to $Q$ via $(R_1,S_1),(R_2,S_2),\ldots,(R_n,S_n)$ iff $P^T$ is strong shift equivalent to $Q^T$ via $(S_1^T,R_1^T),(S_2^T,R_2^T),\ldots,(S_n^T,R_n^T)$.
\end{proof}
\end{lemma}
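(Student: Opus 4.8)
The plan is to reduce the statement to a single elementary strong shift equivalence and then propagate through the chain. The one algebraic ingredient is the anti-homomorphism property of the transpose, $(AB)^T = B^T A^T$, together with the observation that transposition never alters the set of entries of a matrix, so a matrix over a semiring $\mathcal{R}$ always transposes to a matrix over the same $\mathcal{R}$.

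First I would treat the elementary step. Suppose $A$ is ESSE-$\mathcal{R}$ to $B$ via a pair $(U,V)$, meaning $A = UV$ and $B = VU$. Taking transposes and reversing the order of each product gives $A^T = V^T U^T$ and $B^T = U^T V^T$. Reading these two equations in the form $A^T = U'V'$ and $B^T = V'U'$ with $U' = V^T$ and $V' = U^T$ shows that $A^T$ is ESSE-$\mathcal{R}$ to $B^T$ via the pair $(V^T, U^T)$. Since $U'$ and $V'$ have entries in $\mathcal{R}$, this elementary equivalence is again over $\mathcal{R}$.

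Next I would string the elementary steps together. Given a strong shift equivalence $P = A_0, A_1, \ldots, A_\ell = Q$ in which each consecutive pair is elementary strong shift equivalent via $(R_i, S_i)$, the preceding paragraph produces an elementary strong shift equivalence from $A_i^T$ to $A_{i+1}^T$ via $(S_i^T, R_i^T)$ for each $i$. Concatenating these yields the chain $P^T = A_0^T, A_1^T, \ldots, A_\ell^T = Q^T$, establishing that $P^T$ is SSE-$\mathcal{R}$ to $Q^T$. The converse is immediate: apply the forward implication to $P^T$ and $Q^T$ and use $(P^T)^T = P$.

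I do not expect a genuine obstacle here; the only point demanding care is the bookkeeping of the order reversal in the transpose of a product, which forces the roles of the two factors to be swapped, so the pair $(U,V)$ becomes $(V^T, U^T)$ rather than $(U^T, V^T)$. Getting this swap right is precisely what makes both equations $A^T = V^T U^T$ and $B^T = U^T V^T$ line up with the definition of elementary strong shift equivalence in the correct orientation.
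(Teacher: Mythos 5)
Your proposal is correct and is exactly the paper's argument: the paper's one-line proof replaces each elementary pair $(R_i,S_i)$ with $(S_i^T,R_i^T)$, which is precisely the transpose-and-swap step you carry out, with the concatenation of elementary steps left implicit. You simply spell out the bookkeeping that the paper compresses into a single sentence.
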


\begin{theorem}
 Let $P$ be an $n\times n$ positive stochastic matrix over $\mathbb{U}$  with the left Perron eigenvector $l=(l_1,l_2,\ldots,l_n)$. If
 $$
 \sum_{k=1}^n\tfrac{l_i}{l_k} p_{ik}<1+n\tfrac{l_i}{l_j}p_{ij}
 $$
 for all $i,j\in\{1,2,\ldots,n\}$, then $P$ is similar and SSE$-\mathbb{U}_+$ to a positive doubly stochastic  matrix through matrices of the same size.
\end{theorem}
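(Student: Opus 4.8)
The plan is to strip off the Perron weights by a diagonal conjugation, reducing the statement to the column-sum condition already treated in Theorem~\ref{theorem:185}. Since $P$ is positive and stochastic over the field $\mathbb{U}$, its Perron value $1$ is simple and lies in $\mathbb{U}$, so the rank of $P-I_n$ is the same over $\mathbb{U}$ as over $\mathbb{R}$ and the left Perron eigenvector $l$ may be chosen in $\Delta^{n-1}_+\cap\mathbb{U}^n$. I would set $D=\mathrm{diag}(l_1,\ldots,l_n)$ and $\tilde P=DPD^{-1}$, a positive matrix over $\mathbb{U}$ with entries $\tilde p_{ik}=\tfrac{l_i}{l_k}p_{ik}$. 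Two elementary observations drive the argument. First, $\tilde P$ is \emph{column} stochastic: from $j_nD=l$ and $lP=l$ one gets $j_n\tilde P=lPD^{-1}=lD^{-1}=j_n$. Second, the conjugation $P\mapsto\tilde P$ is itself an elementary strong shift equivalence over $\mathbb{U}_+$ through $n\times n$ matrices, realized by $(U,V)=(D^{-1},DP)$: both $D^{-1}$ and $DP$ have nonnegative entries in $\mathbb{U}$, and $UV=D^{-1}(DP)=P$ while $VU=(DP)D^{-1}=\tilde P$. Thus $P$ is ESSE$-\mathbb{U}_+$, hence SSE$-\mathbb{U}_+$, to $\tilde P$, and this step is free without appealing to the path theorem (Theorem~\ref{theorem:1}).

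Next I would pass to $Q:=\tilde P^{T}$, a positive stochastic matrix over $\mathbb{U}$, and check that the hypothesis of the theorem is exactly condition (2) of the Remark following Theorem~\ref{theorem:185} applied to $Q$. Writing it out, $\sum_{k}Q_{kj}=\sum_k\tilde p_{jk}=\sum_k\tfrac{l_j}{l_k}p_{jk}$ and $nQ_{ij}=n\tilde p_{ji}=n\tfrac{l_j}{l_i}p_{ji}$, so the displayed inequality of the theorem is precisely $\sum_k Q_{kj}<1+nQ_{ij}$ for all $i,j$, after interchanging the names of $i$ and $j$. By that Remark this is equivalent to $Q+J_n(I_n-Q)$ being positive, which is the standing hypothesis of Theorem~\ref{theorem:185}.

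To conclude, I would apply Theorem~\ref{theorem:185} to $Q$, obtaining a positive doubly stochastic matrix $R$ that is similar to $Q$ over $\mathbb{U}$ and SSE$-\mathbb{U}_+$ to $Q$ through $n\times n$ matrices. Transposing, $R^{T}$ is again positive doubly stochastic; Lemma~\ref{lemma:2} converts the strong shift equivalence $Q\sim R$ into $\tilde P=Q^{T}\sim R^{T}$ over $\mathbb{U}_+$ through $n\times n$ matrices, and transposing a similarity $R=SQS^{-1}$ gives $R^{T}=(S^{T})^{-1}\tilde P\,S^{T}$, so $\tilde P$ is similar to $R^{T}$ over $\mathbb{U}$. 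Combining this with the first paragraph by transitivity, $P$ is similar to $R^{T}$ and SSE$-\mathbb{U}_+$ to $R^{T}$ through matrices of the same size, which is the assertion.

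I expect the only real difficulties to be bookkeeping rather than genuine obstacles. The conceptual heart is recognizing that the given weighted row-sum inequality on $P$ is the unweighted column-sum condition of Theorem~\ref{theorem:185} after the Perron conjugation $P\mapsto DPD^{-1}$ and a transpose; the index matching in the second paragraph is the most error-prone verification and should be carried out explicitly. The remaining care is to confirm that $l$, and hence $D$ and $D^{-1}$, lie over $\mathbb{U}$ so that every matrix produced stays over $\mathbb{U}_+$ and every similarity stays over $\mathbb{U}$.
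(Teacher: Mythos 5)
Your proposal is correct and follows essentially the same route as the paper: your matrix $Q=\tilde P^{T}=(DPD^{-1})^{T}=D^{-1}P^{T}D$ is exactly the paper's $\widetilde P=\mathcal{S}(P^{T})$, and both arguments verify the column-sum condition for it, apply Theorem~\ref{theorem:185}, and transpose back via Lemma~\ref{lemma:2}. Your explicit ESSE $(U,V)=(D^{-1},DP)$ for the diagonal conjugation and your remark that $l$ lies over $\mathbb{U}$ merely fill in steps the paper leaves implicit.
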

\begin{proof}
  Let $D=\text{diag}(l_1,l_2,\ldots,l_n)$ and $\widetilde{P} = \mathcal{S}(P^T)=D^{-1}P^TD$. Note that $\widetilde{p}_{ij}=\frac{l_jp_{ji}}{l_i}$ for all $i,j\in\{1,2,\ldots,n\}$. By assumption, $\widetilde{P}+J_n(I_n-\widetilde{P})$ is positive. By Theorem \ref{theorem:185}, $\widetilde{P}$ is similar and SSE$-\mathbb{U}_+$ to a positive doubly stochastic matrix, say $Q$, through matrices of the same size. Then $P^T$ is also similar and SSE$-\mathbb{U}_+$ to $Q$ through matrices of the same size. By Lemma \ref{lemma:2}, $P$ is similar and SSE$-\mathbb{U}_+$ to $Q^T$ through matrices of the same size. Since $Q$ is doubly stochastic, $Q^T$ is also doubly stochastic and the proof is completed.
\end{proof}

It is not true in general that every positive stochastic matrix is SSE$-\mathbb{R}_+$ to a positive doubly stochastic matrix of the same size, because there are positive stochastic matrices whose nonzero spectra cannot be the nonzero spectra of doubly stochastic matrices of the same size. An example can be found in \cite{johnson1981row}, which we will reprove here. Firstly, we reprove the following result of Johnson \cite{johnson1981row}.
\begin{proposition}\label{theorem:184}
There is no $3\times 3$ doubly stochastic matrix with the characteristic polynomial $t(t-1)(t+1)$.
\begin{proof}
Suppose there is such a matrix $$A=\left(
                                     \begin{array}{ccc}
                                       a & b & 1-a-b \\
                                       c & d & 1-c-d \\
                                       1-a-c & 1-b-d & a+b+c+d-1 \\
                                     \end{array}
                                   \right).
$$
Observe that $\det(A)=0$ and $\text{Tr}(A)=0$.
Since $A$ is nonnegative and $\text{Tr}(A)=0,$ we have $a=d=a+b+c+d-1=0$. Thus $b+c=1$. Then $A$ can be rewritten as $$A=\left(
                                     \begin{array}{ccc}
                                       0 & b & c \\
                                       c & 0 & b \\
                                       b & c & 0 \\
                                     \end{array}
                                   \right).
$$ Hence $b^3+c^3=\det{(A)}=0$. This implies $b=c=0$ which is a contradiction.
\end{proof}
\end{proposition}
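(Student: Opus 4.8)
The plan is to convert the spectral hypothesis into elementary constraints on the entries and then let nonnegativity do the work. Since $t(t-1)(t+1) = t^3 - t$ has roots $0,1,-1$, any matrix $A$ realizing this polynomial satisfies $\text{Tr}(A) = 0+1+(-1) = 0$ and $\det(A) = 0\cdot 1\cdot(-1) = 0$. The eigenvalue $1$ is automatic for a doubly stochastic matrix, so the genuine content is carried by the vanishing of the trace and of the determinant. I would begin by writing a general $3\times 3$ doubly stochastic matrix with its top-left $2\times 2$ block as free parameters (the row- and column-sum conditions cut the nine entries down to four degrees of freedom, since one of the six sum-equations is redundant), and then impose $\text{Tr}(A)=0$.

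The decisive step, which I would carry out next, is to observe that a doubly stochastic matrix is nonnegative, so its three diagonal entries are nonnegative; since $\text{Tr}(A)=0$ is their sum, every diagonal entry must vanish. This single deduction collapses the four-parameter family into something very rigid, and I expect it to be where all the leverage comes from.

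With the diagonal killed, I would then propagate the row- and column-sum conditions to pin down the off-diagonal entries. Setting $a_{12}=b$ forces, in turn, $a_{32}=1-b$ from column $2$, then $a_{31}=b$ from row $3$, then $a_{21}=1-b$ from column $1$, and $a_{23}=b$ from row $2$; writing $c=1-b$ one lands on the circulant matrix with first row $(0,b,c)$, so that $A$ is essentially governed by the single parameter $b$ subject to $b+c=1$.

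Finally I would bring in the remaining constraint $\det(A)=0$. Expanding the determinant of this circulant along the first row gives $b^3+c^3$, so $b^3+c^3=0$; because $b,c\ge 0$ this forces $b=c=0$, contradicting $b+c=1$. The main obstacle is really just the first nonnegativity argument — recognizing that zero trace annihilates the entire diagonal — after which the doubly stochastic structure and the determinant condition leave no freedom, and the contradiction is immediate.
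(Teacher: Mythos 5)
Your proof is correct and follows essentially the same route as the paper's: extract $\mathrm{Tr}(A)=0$ and $\det(A)=0$ from the characteristic polynomial, use nonnegativity to annihilate the diagonal, reduce via the row- and column-sum conditions to the circulant with first row $(0,b,c)$, $b+c=1$, and derive the contradiction from $\det(A)=b^3+c^3=0$. The only difference is cosmetic: the paper parametrizes the doubly stochastic matrix by its upper-left $2\times 2$ block before killing the diagonal, while you kill the diagonal first and then propagate the sum conditions.
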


Next, we define for any $n\in\mathbb{N}$ the matrix
$$A_n=\dfrac{1}{n+2}\left(
        \begin{array}{ccc}
          1 & n & 1 \\
          n & 1 & 1 \\
          n & 1 & 1 \\
        \end{array}
      \right)
.$$
Suppose that there is a sequence of $3\times 3$ doubly stochastic matrices $\{B_n\}$ such that $B_n$ and $A_n$ are similar for all $n\in\mathbb{N}$. By compactness, $\{B_n\}$ has a convergent subsequence $\{B_{n_k}\}$. Suppose that $\{B_{n_k}\}$ converges to a matrix $B$. Then $B$ is doubly stochastic since the set of doubly stochastic matrices is closed. For any $n\in\mathbb{N}$, the characteristic polynomial of $A_n$ is $p_n(t)=t(t-1)(t+\frac{n-1}{n+2})$ which converges to $t(t-1)(t+1)$ as $n\to\infty$. Thus $B$ must have the characteristic polynomial $t(t-1)(t+1)$ which is a contradiction. So there must be some matrix $A_{n_0}$ which is not similar to a doubly stochastic matrix. Since strong shift equivalence preserves the Jordan form away from zero (and in this case it is the Jordan form), $A_{n_0}$ is not SSE$-\mathbb{R}_+$ to a $3\times 3$ doubly stochastic matrix.
\section{General Strong Shift Equivalence}

Throughout this section, we assume that $\mathbb{U}$ is a subring of $\mathbb{R}$ containing $\mathbb{Q}$. In contrast to similarity, we prove that every positive stochastic matrix over $\mathbb{U}$ is SSE$-\mathbb{U}_+$ to a positive doubly stochastic matrix. We begin with a technical lemma.

\begin{lemma} \label{theorem:181}

Every positive stochastic matrix over $\mathbb{U}$ is similar and SSE$-\mathbb{U}_+$ to a positive stochastic matrix  whose left Perron eigenvector is rational.
\begin{proof} Let $P$ be an $n\times n$ positive stochastic matrix with the left Perron eigenvector $l=(l_1,l_2,\ldots,l_n)$. For each $k\in\mathbb{N}$, let $r_k\in\mathbb{Q}_+^{n-1}$ be such that $r_{kj}\leq l_j$ for all $j\in\{1,2,\ldots,n-1\}$ and $\displaystyle\lim_{k\to\infty}r_k=(l_1,l_2,\ldots,l_{n-1})$. Define
$$M_k=\left(
              \begin{array}{ccccc}
                \frac{l_1}{r_{k1}} & 0 & \cdots & 0 & 1-\frac{l_1}{r_{k1}} \\
                0 & \frac{l_2}{r_{k2}} & \cdots & 0 & 1 - \frac{l_2}{r_{k2}} \\
                \vdots & \vdots & \ddots & \vdots & \vdots \\
                0 & 0 & \cdots & \frac{l_{n-1}}{r_{k,n-1}} & 1 - \frac{l_{n-1}}{r_{k,n-1}} \\
                0 & 0 & \cdots & 0 & 1 \\
              \end{array}
            \right)
$$ and $P_k=M_kPM_k^{-1}$. Note that $M_k\to I_n$ and $P_k\to P$ as $k\to\infty$ and
$$M_k^{-1}=\left(
              \begin{array}{ccccc}
                \frac{r_{k1}}{l_1} & 0 & \cdots & 0 & 1-\frac{r_{k1}}{l_1} \\
                0 & \frac{r_{k2}}{l_2} & \cdots & 0 & 1 - \frac{r_{k2}}{l_2} \\
                \vdots & \vdots & \ddots & \vdots & \vdots \\
                0 & 0 & \cdots & \frac{r_{k,n-1}}{l_{n-1}} & 1 - \frac{r_{k,n-1}}{l_{n-1}} \\
                0 & 0 & \cdots & 0 & 1 \\
              \end{array}
            \right)$$
is nonnegative for all $k\in\mathbb{N}$. Choose $N$ such that $M_NP$ is positive. Let $\widehat{l}=(\widehat{l}_1,\widehat{l}_2,\ldots,\widehat{l}_n)$ where $\widehat{l}_j=r_{Nj}$ for $j\in\{1,2,\ldots,n-1\}$ and $\displaystyle\widehat{l}_n=1-\sum_{j=1}^{n-1} r_{Nj}$. Then $\widehat{l}$ is rational, $\widehat{l}M_N=l$, and
\begin{align*}
\widehat{l}P_N &=\widehat{l}M_NPM_N^{-1} \\
&=lPM_N^{-1} \\
&= lM_N^{-1} \\
&=\widehat{l}.
\end{align*}
Thus $\widehat{l}$ is the left Perron eigenvector of $P_N$. Since $M_NP$ is positive, $P$ is similar and SSE$-\mathbb{U}_+$ to $P_N$.
\end{proof}
\end{lemma}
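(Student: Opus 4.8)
The plan is to realize the desired matrix by a similarity that nudges the left Perron eigenvector to a nearby \emph{rational} probability vector, and then to certify the strong shift equivalence with Theorem \ref{theorem:1}. Write $l=(l_1,\dots,l_n)$ for the left Perron eigenvector of the given positive stochastic matrix $P$. Conjugating $P$ by an invertible $M$ sends the left Perron eigenvector $l$ to $lM^{-1}$, so it suffices to find an invertible $M$, as close to $I_n$ as we wish, such that $lM^{-1}$ is a rational vector in $\Delta^{n-1}_+$ and $MPM^{-1}$ is again positive stochastic.

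First I would choose rational approximants: pick $\widehat l\in\Delta^{n-1}_+\cap\mathbb Q^n$ as close to $l$ as desired, with $\widehat l_j\le l_j$ for $j<n$ (approximating the first $n-1$ coordinates of $l$ from below). I would then seek $M$ of an easily invertible shape, differing from $I_n$ only along one designated column, so that the equation $\widehat l\,M=l$ can be solved explicitly and, crucially, so that $M^{-1}$ is nonnegative with no zero column. The inequalities $\widehat l_j\le l_j$ are exactly what force the relevant entries of $M^{-1}$ into $[0,1]$, giving $M^{-1}\ge 0$; one also checks that both $M$ and $M^{-1}$ fix the column vector $j_n^T$, so that $Q:=MPM^{-1}$ remains stochastic with left Perron eigenvector $\widehat l$. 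Since $M$ depends continuously on $\widehat l$ and equals $I_n$ when $\widehat l=l$, taking $\widehat l$ close enough to $l$ gives $M\to I_n$, hence $Q\to P$; by openness of the positive cone, $Q$ is positive.

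It remains to upgrade the similarity to an SSE$-\mathbb U_+$, and there are two natural finishes. The quick one is the elementary strong shift equivalence hidden in the factorization: from $P=M^{-1}(MP)$ and $Q=(MP)M^{-1}$ one reads off $U=M^{-1}$ and $V=MP$ with $P=UV$ and $Q=VU$, where $U\ge 0$ by construction and $V=MP>0$ once $M$ is near $I_n$; provided these factors lie over $\mathbb U$, this is a single ESSE$-\mathbb U_+$ step. The more robust finish is to invoke Theorem \ref{theorem:1}: conjugating $P$ along a path of invertible matrices from $I_n$ to $M$ keeps us in a single $\mathbb R$-similarity class, and for $\widehat l$ close to $l$ the whole path of conjugates stays positive, so Theorem \ref{theorem:1} delivers the SSE$-\mathbb U_+$ through matrices of the same size.

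The step I expect to be the main obstacle is the bookkeeping needed to keep everything over $\mathbb U$ while $l$ itself may be irrational: the entries of $M$ and $M^{-1}$ are built from the $l_j$, so one must verify that the matrices actually used, either the ESSE pair $U,V$ or the endpoints fed into Theorem \ref{theorem:1}, genuinely have entries in $\mathbb U$, exploiting that $\mathbb U\supseteq\mathbb Q$ together with the prescribed triangular shape. A secondary point is maintaining positivity uniformly along the chosen path, which I would handle by taking $\widehat l$ close enough to $l$ that every conjugator on the path lies in a neighborhood of $I_n$ on which the conjugate of $P$ stays positive.
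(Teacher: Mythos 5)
Your proposal takes essentially the same route as the paper's proof: approximate the first $n-1$ coordinates of $l$ from below by rationals, conjugate by a near-identity matrix $M$ with row sums $1$ solving $\widehat{l}M=l$ whose inverse is nonnegative precisely because $\widehat{l}_j\le l_j$, and extract the ESSE$-\mathbb{U}_+$ from $P=M^{-1}(MP)$ and $(MP)M^{-1}$ with $MP>0$ once $M$ is close enough to $I_n$. Two small remarks: the paper's $M$ differs from $I_n$ along the diagonal \emph{and} the last column (a matrix differing from $I_n$ in only one column and fixing $j_n^T$ would be forced to equal $I_n$), and the bookkeeping you flag as the main obstacle --- that the factors involve the possibly irrational $l_j$ and so may live only in the fraction field of $\mathbb{U}$ --- is likewise left implicit in the paper.
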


\begin{theorem} \label{theorem:182}
Every  positive stochastic matrix over $\mathbb{U}$ is SSE$-\mathbb{U}_+$ to a positive doubly stochastic matrix over $\mathbb{U}$.
\begin{proof}
Let $P$ be an $n\times n$ stochastic matrix over $\mathbb{U}$. By Lemma~\ref{theorem:181}, we can assume that the left Perron eigenvector of $P$ is rational, namely
$$
l=\left(\frac{r_1}{s_1},\frac{r_2}{s_2},\ldots,\frac{r_n}{s_n}\right)
$$
where $r_i,s_i\in\mathbb{N}$ for all $i\in\{1,2,\ldots,n\}$. Let
$$
M=\text{lcm}(s_1,s_2,\ldots,s_n).
$$
Then $l$ can be written as
$$
l=\frac{1}{M}\left(m_1,m_2,\ldots,m_n\right)
$$
where $m_i=\frac{r_iM}{s_i}\in\mathbb{N}$ for all $i\in\{1,2,\ldots,n\}$. If $m_1\neq 1$, we perform an elementary column splitting on the first column of $P$ as follows:
$$P^{(1)}=\left(
           \begin{array}{ccccc}
             \frac{1}{m_1}p_{11} & (1-\frac{1}{m_1})p_{11} & p_{12} & \cdots & p_{1n} \\
             \frac{1}{m_1}p_{11} & (1-\frac{1}{m_1})p_{11} & p_{12} & \cdots & p_{1n} \\
             \frac{1}{m_1}p_{21} & (1-\frac{1}{m_1})p_{21} & p_{22} & \cdots & p_{2n} \\
             \vdots & \vdots & \vdots & \ddots & \vdots \\
             \frac{1}{m_1}p_{n1} & (1-\frac{1}{m_1})p_{n1} & p_{n2} & \cdots & p_{nn} \\
           \end{array}
         \right).
$$
The left Perron eigenvector of $P^{(1)}$ is
$$
l^{(1)}=\frac{1}{M}\left(1,m_1-1,m_2,\ldots,m_n\right).
$$
If $m_1-1\neq 1$ we perform an elementary column splitting on the second column of $P^{(1)}$ by splitting the second column of $P^{(1)}$ as $\frac{1}{m_1-1}C_2^{(1)}$ and $(1-\frac{1}{m_1-1})C_2^{(1)}$ where $C_2^{(1)}$ is the second column of $P^{(1)}$. Suppose $P^{(2)}$ is the matrix after splitting $P^{(1)}$. Then the left Perron eigenvector of $P^{(2)}$ is
$$
l^{(2)}=\frac{1}{M}\left(1,1,m_1-2,\ldots,m_n\right).
$$
Continuing in this manner, we finally get an $M\times M$ matrix $P^{(k)}$ whose the left Perron eigenvector $l^{(k)}$ is $\frac{1}{M}j_M$ for some $k\in\mathbb{N}$. Note that $P^{(i)}$ is stochastic for all $i\in\{1,2,\ldots,k\}$. Therefore, $P^{(k)}$ is doubly stochastic. This completes the proof.
\end{proof}
\end{theorem}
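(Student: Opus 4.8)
The plan is to reduce first to a rational Perron eigenvector and then to \emph{peel off} its integer weights one unit at a time by elementary column splittings, arriving at a matrix whose left Perron eigenvector is uniform. By Lemma~\ref{theorem:181} we may assume the left Perron eigenvector $l$ of the given $n\times n$ positive stochastic matrix $P$ is rational; since $\sum_k l_k=1$, clearing denominators writes $l=\frac{1}{M}(m_1,\ldots,m_n)$ with $m_i\in\mathbb{N}$ and $M=\sum_i m_i$. The target is an $M\times M$ positive stochastic matrix whose left Perron eigenvector is $\frac{1}{M}j_M$: for such a matrix the identity $\frac{1}{M}j_M P=\frac{1}{M}j_M$ forces every column sum to equal $1$, so being stochastic it is automatically doubly stochastic.

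The engine is the elementary column splitting of the Example type, and I would first record how it transforms the left Perron eigenvector. If $A=XV$ and $C=VX$ with $V$ the amalgamation matrix duplicating a row of the identity, then whenever $lA=\lambda l$ one has $(lX)C=lXVX=lAX=\lambda(lX)$, so $lX$ is a left eigenvector of $C$ for the same eigenvalue; and when $X$ is obtained by splitting column $i$ of $A$ into $\alpha C_i$ and $(1-\alpha)C_i$ (with $C_i$ the $i$th column), the entry $l_i$ of $lX$ is replaced by the pair $\alpha l_i,(1-\alpha)l_i$ while the other entries are unchanged. Applying this with $\alpha=1/m_i$ to the coordinate carrying integer weight $m_i$ converts that weight into the pair $1,m_i-1$, exactly as in the displayed computation of $l^{(1)}$ in the proof.

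Then I would iterate. Starting from the weights $(m_1,\ldots,m_n)$, I repeatedly split the first coordinate whose weight exceeds $1$, using the factor $\alpha=1/(\text{current weight})$, so that a weight $m$ is replaced by $1,m-1$; after precisely $M-n$ splittings every weight has been reduced to $1$ and the eigenvector has become $\frac{1}{M}j_M$ on an $M\times M$ matrix. Each splitting is an ESSE$-\mathbb{U}_+$: the factors $\alpha=1/m$ and $1-1/m$ lie in $\mathbb{Q}\subseteq\mathbb{U}$ (this is where the hypothesis $\mathbb{U}\supseteq\mathbb{Q}$ enters), each lies strictly between $0$ and $1$ because we split only when the weight exceeds $1$, and hence every intermediate matrix remains positive and stochastic (stochasticity because splitting a column preserves row sums). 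Since SSE$-\mathbb{U}_+$ is transitive, $P$ is SSE$-\mathbb{U}_+$ to the resulting $M\times M$ doubly stochastic matrix.

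The step I expect to be the main obstacle is the bookkeeping in the iteration: one must be sure that an elementary column splitting applied to an \emph{intermediate} matrix, rather than to $P$ itself, still fits the Example template, keeps the matrix positive and stochastic, and transforms the Perron eigenvector by the single-entry rule above. The safest route is to prove the transformation rule once in the general form that $lX$ is a left eigenvector of $VX$ and to invoke it at every step, instead of recomputing eigenvectors by hand; the remaining verifications, namely positivity from $\alpha,1-\alpha\in(0,1)$ and stochasticity from the invariance of row sums, are then routine.
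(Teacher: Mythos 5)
Your proposal is correct and follows essentially the same route as the paper: reduce to a rational left Perron eigenvector via Lemma~\ref{theorem:181}, write it as $\frac{1}{M}(m_1,\ldots,m_n)$, and peel the integer weights down to $1$ by repeated elementary column splittings with factors $\frac{1}{m}$, ending with an $M\times M$ positive stochastic matrix with uniform Perron eigenvector, which is therefore doubly stochastic. Your explicit statement of the eigenvector transformation rule ($lX$ is a left eigenvector of $VX$) and of why a uniform Perron eigenvector forces double stochasticity just makes precise what the paper's displayed computations carry implicitly.
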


\begin{example}
  Let $P=\dfrac{1}{10}\left(
                       \begin{array}{ccc}
                         7 & 2 & 1 \\
                         2 & 7 & 1 \\
                         2 & 2 & 6 \\
                       \end{array}
                     \right)
  $. Then $l=\frac{1}{5}(2,2,1)$ is the left Perron eigenvector of $P$. Using the process in the proof of Theorem \ref{theorem:182}, we obtain
$$
P^{(1)}=\frac{1}{20}\left(
                              \begin{array}{cccc}
                                7 & 7 & 4 & 2 \\
                                7 & 7 & 4 & 2 \\
                                2 & 2 & 14 & 2 \\
                                2 & 2 & 4 & 12 \\
                              \end{array}
                            \right)
                      \text{ and }
P^{(2)}=\frac{1}{20}\left(
                      \begin{array}{ccccc}
                        7 & 7 & 2 & 2 & 2 \\
                        7 & 7 & 2 & 2 & 2 \\
                        2 & 2 & 7 & 7 & 2 \\
                        2 & 2 & 7 & 7 & 2 \\
                        2 & 2 & 2 & 2 & 12 \\
                      \end{array}
                    \right).
$$
\end{example}

\begin{corollary}\label{theorem:183}
If $\mathbb{U}$ is a subfield of $\mathbb{R}$, then the set of nonzero spectra of positive doubly stochastic matrices over $\mathbb{U}$ and the set of nonzero spectra of primitive stochastic matrices over $\mathbb{U}$ with positive trace coincide.
\begin{proof}
Let $A$ be a primitive stochastic matrix over $\mathbb{U}$ with positive trace. By \cite[Proposition B.3]{boyle2013path}, $A$ is SSE$-\mathbb{U}_+$ to a positive matrix $\widetilde{A}$ over $\mathbb{U}$. The right eigenvector of $\widetilde{A}$ corresponding to the eigenvalue $1$ is a positive vector over $\mathbb{U}$. Thus $\mathcal{S}(\widetilde{A})$ is a positive stochastic matrix over $\mathbb{U}$. By Theorem \ref{theorem:182}, $\mathcal{S}(\widetilde{A})$ is SSE$-\mathbb{U}_+$ to a positive doubly stochastic matrix $B$ over $\mathbb{U}$. This implies $A$ is SSE$-\mathbb{U}_+$ to $B$. Thus $A$ and $B$ have identical nonzero spectrum.
\end{proof}
\end{corollary}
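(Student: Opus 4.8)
The plan is to prove the two inclusions separately, observing that one is essentially immediate while the other carries all the content. For the easy direction, every positive doubly stochastic matrix over $\mathbb{U}$ is in particular stochastic over $\mathbb{U}$; being positive it is primitive (it already equals its own positive first power), and its diagonal entries are strictly positive, so its trace is positive. Hence its nonzero spectrum already occurs as the nonzero spectrum of a primitive stochastic matrix over $\mathbb{U}$ of positive trace, giving the inclusion of the doubly-stochastic side into the primitive-stochastic side with no work.

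For the reverse inclusion I would take an arbitrary primitive stochastic matrix $A$ over $\mathbb{U}$ of positive trace and aim to manufacture a positive doubly stochastic matrix $B$ over $\mathbb{U}$ that is SSE$-\mathbb{U}_+$ to $A$; since SSE$-\mathbb{R}_+$ preserves the nonzero spectrum, $A$ and $B$ would then share a nonzero spectrum and we would be done. The first step is to replace $A$ by a genuinely positive matrix without disturbing the nonzero spectrum. Because $A$ is primitive of positive trace, \cite[Proposition B.3]{boyle2013path} applies and produces a positive matrix $\widetilde{A}$ over $\mathbb{U}$ with $A$ SSE$-\mathbb{U}_+$ to $\widetilde{A}$. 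As SSE preserves the nonzero spectrum and $A$ is stochastic with spectral radius $1$, while $\widetilde{A}$ is positive and so (by Perron--Frobenius) has a simple dominant positive eigenvalue strictly exceeding all others in modulus, this dominant eigenvalue lies in the common nonzero spectrum and must therefore equal $1$.

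Next I would stochasticize. Since $\widetilde{A}$ is positive with Perron eigenvalue $1\in\mathbb{U}$, its one-dimensional right Perron eigenspace is defined over $\mathbb{U}$, so a positive right eigenvector with entries in $\mathbb{U}$ exists; letting $D$ be the corresponding positive diagonal matrix (invertible over $\mathbb{U}$), the stochasticization $\mathcal{S}(\widetilde{A})=D^{-1}\widetilde{A}D$ is a positive stochastic matrix over $\mathbb{U}$. The point to verify here is that a positive diagonal similarity is itself an SSE$-\mathbb{U}_+$: writing $\widetilde{A}=D(D^{-1}\widetilde{A})$ and $\mathcal{S}(\widetilde{A})=(D^{-1}\widetilde{A})D$ exhibits them as elementary strong shift equivalent over $\mathbb{U}_+$, since both $D$ and $D^{-1}\widetilde{A}$ are nonnegative over $\mathbb{U}$. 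Finally, Theorem~\ref{theorem:182} applies to the positive stochastic matrix $\mathcal{S}(\widetilde{A})$ and yields a positive doubly stochastic $B$ over $\mathbb{U}$ that is SSE$-\mathbb{U}_+$ to it. Concatenating the three equivalences from $A$ to $\widetilde{A}$ to $\mathcal{S}(\widetilde{A})$ to $B$ gives $A$ SSE$-\mathbb{U}_+$ to $B$, and invariance of the nonzero spectrum closes the argument.

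The step that does the real lifting is Theorem~\ref{theorem:182}, which is already in hand, so the proof is mainly an assembly. The subtler bookkeeping point, and the only place I expect any friction, is staying inside the field $\mathbb{U}$ throughout: I would take care that the eigenvector used for stochasticization can be chosen over $\mathbb{U}$ (which is exactly why it matters that $1\in\mathbb{U}$ and $\widetilde{A}$ is over $\mathbb{U}$), and that both the cited result and Theorem~\ref{theorem:182} deliver matrices over $\mathbb{U}$, so that the entire chain of SSE$-\mathbb{U}_+$ equivalences never leaves $\mathbb{U}$.
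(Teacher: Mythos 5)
Your proof is correct and takes essentially the same route as the paper's: pass from $A$ to a positive matrix $\widetilde{A}$ via \cite[Proposition B.3]{boyle2013path}, stochasticize over $\mathbb{U}$, apply Theorem~\ref{theorem:182}, and concatenate the equivalences. You additionally make explicit several points the paper leaves implicit --- the trivial inclusion of the doubly stochastic side, the fact that the Perron eigenvalue of $\widetilde{A}$ must be $1$, and that the diagonal conjugation $\widetilde{A}=D(D^{-1}\widetilde{A})$, $\mathcal{S}(\widetilde{A})=(D^{-1}\widetilde{A})D$ is itself an ESSE over $\mathbb{U}_+$ --- which tightens the step where the paper simply asserts that $A$ is SSE$-\mathbb{U}_+$ to $B$.
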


\begin{remark} \label{rationalsizeremark}
The bound on the size of a doubly stochastic matrix obtained from Theorem~\ref{theorem:182} is still unknown. We mention also that a primitive $n\times n$ matrix over $\mathbb U$ is SSE-$\mathbb U_+$ to a positive matrix of size at most $2n^2 \times 2n^2$ \cite[Proposition B.3]{boyle2013path}.
\end{remark}

\section{Unboundedness of Lags of SSE$-\mathbb{R}_+$}

The purpose of this section is to provide the following example.

\begin{theorem}\label{theorem:191}
For $t\in [0,1]$, define $$P_t=\frac{1}{4}\left(
                             \begin{array}{cc}
                               3+t & 1-t \\
                               1+t & 3-t \\
                             \end{array}
                           \right).
$$ For $0\leq t<1$, the matrices $P_t$ are positive, similar, and
SSE$-\mathbb{R}_+$. However, for any positive integer $L\geq 2$, there exists $t\in (0,1)$
such that there is no SSE$-\mathbb{R}_+$ of lag less than $L$ between $P_0$ and $P_t$ using
only matrices with size less than $L$.
\end{theorem}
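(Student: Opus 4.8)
The plan is to dispatch the first assertion quickly and then concentrate on the lag lower bound. Each $P_t$ has row sums $1$, trace $\tfrac32$, and determinant $\tfrac12$, so its characteristic polynomial is $\lambda^2-\tfrac32\lambda+\tfrac12=(\lambda-1)(\lambda-\tfrac12)$; the eigenvalues $1$ and $\tfrac12$ are distinct, so every $P_t$ is similar over $\mathbb R$ to $\mathrm{diag}(1,\tfrac12)$ and hence to one another. For $0\le t<1$ every entry of $P_t$ is strictly positive, and $s\mapsto P_s$ is a path of positive matrices lying in this single similarity class; applying Theorem \ref{theorem:1} with $\mathbb U=\mathbb R$ to the restricted path from $P_0$ to $P_t$ gives an SSE$-\mathbb R_+$ between them through $2\times 2$ matrices. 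This settles the first sentence.

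For the second assertion I would first convert a strong shift equivalence into a shift equivalence. Suppose $(U_i,V_i)_{i=1}^{\ell}$ is any SSE$-\mathbb R_+$ from $P_0$ to $P_t$ of arbitrary size. Since the endpoints are $2\times 2$, the telescoped pair $(U,V)=(U_1\cdots U_\ell,\,V_\ell\cdots V_1)$ consists of two $2\times 2$ nonnegative matrices and, as recorded after the definition of shift equivalence, forms a lag $\ell$ SE$-\mathbb R_+$ from $P_0$ to $P_t$; in particular $P_0U=UP_t$, $VP_0=P_tV$, $UV=P_0^{\ell}$ and $VU=P_t^{\ell}$. Because $P_0^{\ell}$ and $P_t^{\ell}$ both have nonzero determinant $2^{-\ell}$, both $U$ and $V$ are invertible. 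The crucial feature is that this reduction is completely insensitive to the sizes appearing in the original SSE, so any lower bound we obtain on $\ell$ will hold for every SSE regardless of size.

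The key step is to make the intertwining relations explicit in eigencoordinates. Let $R_0$ have columns $(1,1)^{T}$ and $(1,-1)^{T}$, and $R_t$ have columns $(1,1)^{T}$ and $(1-t,-(1+t))^{T}$, the right $1$- and $\tfrac12$-eigenvectors of $P_0$ and $P_t$; write $\Lambda=\mathrm{diag}(1,\tfrac12)$. Since the eigenvalues are simple and $U,V$ are invertible, the relations $P_0U=UP_t$ and $VP_0=P_tV$ force $U=R_0 D_U R_t^{-1}$ and $V=R_t D_V R_0^{-1}$ for diagonal $D_U=\mathrm{diag}(\alpha,\beta)$ and $D_V=\mathrm{diag}(\alpha',\beta')$, while $UV=P_0^{\ell}$ forces $D_UD_V=\Lambda^{\ell}$, i.e. $\alpha'=\alpha^{-1}$ and $\beta'=2^{-\ell}\beta^{-1}$. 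Nonnegativity of $U(1,1)^{T}=\alpha(1,1)^{T}$ forces $\alpha>0$. Writing out the four entries of $U$ and of $V$ and keeping only the binding inequalities, nonnegativity of $U$ gives $\lvert\beta\rvert\le \alpha(1-t)$, whereas nonnegativity of $V$ gives $(1+t)2^{-\ell}\alpha\le\lvert\beta\rvert$. Combining these and cancelling $\alpha>0$ yields
$$
2^{\ell}\ \ge\ \frac{1+t}{1-t}.
$$

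The theorem then follows: given $L\ge 2$, choose $t\in(0,1)$ with $\tfrac{1+t}{1-t}>2^{L}$, for instance any $t>\tfrac{2^{L}-1}{2^{L}+1}$. Any SSE$-\mathbb R_+$ from $P_0$ to $P_t$ produces, by the above, a lag $\ell$ with $2^{\ell}\ge\tfrac{1+t}{1-t}>2^{L}$, hence $\ell>L$; in particular no such SSE can have lag less than $L$, irrespective of the sizes used. I expect the main obstacle to be the bookkeeping in the third paragraph: one must verify that every lag $\ell$ shift equivalence over $\mathbb R_+$ is captured by the diagonal parametrization $(D_U,D_V)$ (this is exactly where simplicity of the spectrum and invertibility of $U,V$ enter) and then isolate, among the eight entrywise constraints, the two that tighten as $t\to 1$. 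Everything else is routine. I note that this argument in fact proves the stronger statement that the minimal lag of an SSE$-\mathbb R_+$ between $P_0$ and $P_t$ is unbounded even with no restriction on size, so the size hypothesis in the statement is automatically subsumed.
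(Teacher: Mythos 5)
Your proof is correct, and it takes a genuinely different route from the paper's. The paper argues by compactness: assuming a uniform bound on both lag and size valid for all $t\in(0,1)$, it invokes Theorem \ref{theorem:193} to replace every matrix in each chain of elementary equivalences by a generalized row stochastic one, so that all matrices involved have entries bounded by $1$ and bounded size; a subsequence $t_n\to 1$ then yields a limiting SSE$-\mathbb{R}_+$ between $P_0$ and $P_1$, contradicting the fact that SSE preserves irreducibility ($P_1$ being reducible). That argument essentially needs the size restriction, since it is what makes the family of chains precompact. You instead telescope an arbitrary lag-$\ell$ SSE to a lag-$\ell$ shift equivalence $(U,V)$, which is automatically $2\times 2$, nonnegative, and invertible because $\det P_0^{\ell}=2^{-\ell}\neq 0$, and then use the simple spectrum $\{1,\tfrac12\}$ to force $U=R_0D_UR_t^{-1}$, $V=R_tD_VR_0^{-1}$ with $D_UD_V=\mathrm{diag}(1,2^{-\ell})$. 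I verified the entrywise computations: nonnegativity of $U$ gives $\lvert\beta\rvert\le\alpha(1-t)$ (from its second column), nonnegativity of $V$ gives $(1+t)\lvert\beta'\rvert\le\alpha'$ (from its second row, where the factor $1+t$ enters through the second eigenvector of $P_t$), and combining with $\alpha\alpha'=1$, $\beta\beta'=2^{-\ell}$ yields $2^{\ell}\ge\frac{1+t}{1-t}$ exactly as you claim. This buys strictly more than the paper's proof: it is quantitative, it avoids stochasticization and compactness entirely, and, since telescoping is insensitive to the sizes of the intermediate matrices, it establishes the conclusion with the size restriction deleted. In particular, your argument answers affirmatively the Question the paper poses immediately after Theorem \ref{theorem:191}, which the paper's own method cannot reach; this strengthening is worth recording explicitly.
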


\begin{definition}
  A (not necessarily square) matrix $A$ is called \emph{generalized row stochastic} if it is nonnegative and every row sum of $A$ is $1$.
\end{definition}

We need the following theorem for the proof of Theorem~\ref{theorem:191}.

\begin{theorem} \label{theorem:193}
Let $A$ and $B$ be respectively $m\times m$ and $n\times n$ irreducible matrices over $\mathbb{R}$. If $A$ and $B$ are $\text{ESSE}-\mathbb{R}_+$ then $\mathcal{S}(A)$ and $\mathcal{S}(B)$ are also $\text{ESSE}-\mathbb{R}_+$. Moreover, there exist generalized row stochastic matrices $R,S$ such that $\mathcal{S}(A)=RS$ and $\mathcal{S}(B)=SR$.
\begin{proof}  Since $A$ and $B$ are ESSE$-\mathbb{R}_+$, they have the same Perron eigenvalue $\lambda$. Let $v=(v_1,v_2,\ldots,v_m)\in\mathbb{R}^m$ and $w=(w_1,w_2,\ldots,w_n)\in\mathbb{R}^n$ be such that $Av=\lambda v$ and $Bw=\lambda w$. Let $D=$ diag($v_1,\ldots,v_m$) and $E=$ diag($w_1,..,w_n$). Then $\mathcal{S}(A)=\frac{1}{\lambda}D^{-1}AD$ and $\mathcal{S}(B)=\frac{1}{\lambda}E^{-1}BE$. Suppose that $A=XY$ and $B=YX$. Then
\begin{center}
$\mathcal{S}(A)=\Big(\frac{1}{\lambda}D^{-1}XE\Big)\Big(E^{-1}YD\Big)$ and $\mathcal{S}(B)=\Big(E^{-1}YD\Big)\Big(\frac{1}{\lambda}D^{-1}XE\Big)$.
\end{center}
Thus $\mathcal{S}(A)$ and $\mathcal{S}(B)$ are ESSE$-\mathbb{R}_+$. Next, suppose that $\mathcal{S}(A)=UV$ and $\mathcal{S}(B)=VU$. Since $\mathcal{S}(A)U=U\mathcal{S}(B)$, we have $\mathcal{S}(A)Uj_n^T=U\mathcal{S}(B)j_n^T=Uj_n^T.$ Thus $Uj_n^T$ is a right eigenvector of $\mathcal{S}(A)$ corresponding to an eigenvalue $1$ and hence $Uj_n^T=\alpha j_m^T$ for some $\alpha >0$. Similarly, $Vj_m^T=V\mathcal{S}(A)j_m^T=\mathcal{S}(B)Vj_m^T$, so $Vj_m^T=\beta j_n^T$ for some $\beta > 0$. Let $R=\frac{1}{\alpha}U$ and $S=\frac{1}{\beta}V$. Then $Rj_n^T=\frac{1}{\alpha}Uj_n^T=j_m^T$ and $Sj_m^T=\frac{1}{\beta}Vj_m^T=j_n^T$. Thus $R,S$ are generalized row stochastic matrices. Furthermore, we have $\mathcal{S}(A)=UV=(\alpha\beta) RS$ and $\mathcal{S}(B)=VU=(\alpha\beta) SR$. Note that
\begin{align*}
m &=j_m\mathcal{S}(A)j_m^T \\
&=\alpha\beta j_mRSj_m^T \\
&=\alpha\beta j_mRj_n^T \\
&=\alpha\beta j_mj_m^T \\
&=m\alpha\beta.
\end{align*}
Thus $\alpha\beta = 1$ and hence $\mathcal{S}(A)=RS$ and $\mathcal{S}(B)=SR$.
\end{proof}
\end{theorem}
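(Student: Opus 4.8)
The plan is to prove the two assertions separately: first transport an elementary factorization of $A$ through the diagonal conjugations that define the stochasticizations, and then renormalize the resulting factors to make them generalized row stochastic. Throughout I would use that $\mathcal{S}(A)$ and $\mathcal{S}(B)$ are themselves stochastic, so that $j_m^T$ and $j_n^T$ are right eigenvectors for the eigenvalue $1$.

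First I would record the Perron data. Since $A$ and $B$ are ESSE$-\mathbb{R}_+$ they are SE$-\mathbb{R}_+$, hence have the same nonzero spectrum; as both are irreducible their common spectral radius $\lambda>0$ is the shared Perron eigenvalue. Let $v>0$ and $w>0$ be the right Perron eigenvectors of $A$ and $B$, set $D=\mathrm{diag}(v)$ and $E=\mathrm{diag}(w)$, so that $\mathcal{S}(A)=\tfrac1\lambda D^{-1}AD$ and $\mathcal{S}(B)=\tfrac1\lambda E^{-1}BE$, each with all row sums equal to $1$, i.e. $\mathcal{S}(A)j_m^T=j_m^T$ and $\mathcal{S}(B)j_n^T=j_n^T$. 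For the first assertion I would write $A=XY$, $B=YX$ with $X,Y\geq 0$ and insert $EE^{-1}$ between the factors: putting $U=\tfrac1\lambda D^{-1}XE$ and $V=E^{-1}YD$ gives $UV=\tfrac1\lambda D^{-1}XYD=\mathcal{S}(A)$ and $VU=\tfrac1\lambda E^{-1}YXE=\mathcal{S}(B)$, and since $D,E$ are positive diagonal while $X,Y,\tfrac1\lambda$ are nonnegative, both $U$ and $V$ are nonnegative. Hence $\mathcal{S}(A)$ and $\mathcal{S}(B)$ are ESSE$-\mathbb{R}_+$.

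For the moreover clause I would start from any nonnegative elementary factorization $\mathcal{S}(A)=UV$, $\mathcal{S}(B)=VU$ as just produced. The intertwining $\mathcal{S}(A)U=UVU=U\mathcal{S}(B)$ yields $\mathcal{S}(A)(Uj_n^T)=U\mathcal{S}(B)j_n^T=Uj_n^T$, so $Uj_n^T$ lies in the eigenspace of $\mathcal{S}(A)$ for the eigenvalue $1$. This is where Perron--Frobenius enters: $\mathcal{S}(A)$ inherits the irreducibility of $A$ and is stochastic, so $1$ is its spectral radius with a one-dimensional eigenspace spanned by $j_m^T$; therefore $Uj_n^T=\alpha j_m^T$ for a scalar $\alpha$, and $\alpha>0$ because $\alpha=0$ would force $U$ (nonnegative with zero row sums) to vanish and hence $\mathcal{S}(A)=0$. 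Symmetrically $Vj_m^T=\beta j_n^T$ with $\beta>0$. Then $R=\tfrac1\alpha U$ and $S=\tfrac1\beta V$ satisfy $Rj_n^T=j_m^T$ and $Sj_m^T=j_n^T$, so they are generalized row stochastic, while $\mathcal{S}(A)=\alpha\beta RS$ and $\mathcal{S}(B)=\alpha\beta SR$.

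The remaining point, and the step I expect to require the most care, is eliminating the stray scalar $\alpha\beta$ so that the normalized factors give $\mathcal{S}(A)$ and $\mathcal{S}(B)$ exactly rather than up to a constant. I would pin it down by the bilinear evaluation of $j_m\,\mathcal{S}(A)\,j_m^T$ in two ways: stochasticity gives $j_m\,\mathcal{S}(A)\,j_m^T=j_m j_m^T=m$, whereas $\mathcal{S}(A)=\alpha\beta RS$ together with $Sj_m^T=j_n^T$ and $Rj_n^T=j_m^T$ gives $j_m\,\mathcal{S}(A)\,j_m^T=\alpha\beta\,j_m j_m^T=\alpha\beta\,m$. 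Comparing forces $\alpha\beta=1$, whence $\mathcal{S}(A)=RS$ and $\mathcal{S}(B)=SR$ with $R,S$ generalized row stochastic, completing the proof. The conceptual crux is thus that the all-ones vector is a genuine Perron eigenvector of a stochasticization, which both furnishes the normalization and, through irreducibility, guarantees its uniqueness up to scale.
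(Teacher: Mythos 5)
Your proposal is correct and follows essentially the same route as the paper: the same conjugated factorization $U=\tfrac1\lambda D^{-1}XE$, $V=E^{-1}YD$ for the ESSE claim, the same Perron--Frobenius identification $Uj_n^T=\alpha j_m^T$, $Vj_m^T=\beta j_n^T$, and the same evaluation of $j_m\,\mathcal{S}(A)\,j_m^T$ to force $\alpha\beta=1$. Your added justifications (why $\alpha>0$, and the one-dimensionality of the eigenspace via irreducibility) are points the paper leaves implicit, and they are sound.
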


\begin{proof}[Proof of Theorem~\ref{theorem:191}]
The similarity holds because $\text{Tr}(P_t)=6, \det(P_t)=8$ for all $0\leq t<1$. By Theorem~\ref{theorem:1}, $P_t$ and $P_0$ are SSE$-\mathbb{R}_+$
for all $0\leq t<1$. It is well-known that strong shift equivalence preserves irreducibility \cite[Proposition 7.4.1]{LM1995}. So $P_0$ and $P_1$ are not
SSE$-\mathbb{R}_+$ because $P_0$ is irreducible whereas $P_1$ is reducible. Next, suppose that $P_0$ and $P_t$ are SSE over $\mathbb{R}_+$ via
$2\times 2$ matrices with lag $l\leq k$ and size $n\leq k$ for all $t\in (0,1)$. Without loss of generality, we assume that the lag $l=k$ for all $t\in (0,1)$. For
each $t\in (0,1)$ we have a chain of ESSEs over $\mathbb{R}_+$, $P_0,A_1(t), \ldots, A_{k-1}(t),P_t$, together with a chain of intermediate matrices
$(R_1(t),S_1(t)),\ldots,(R_k(t),S_k(t))$. Since $P_t$ is positive for each $0\leq t<1$, each $A_i(t)$ has a unique maximal irreducible submatrix, say
$\hat{A}_i(t)$. The given SSE restricts to an SSE of the $\hat{A}_i(t)$. So, without loss of generality, we assume $\hat{A}_i(t)=A_i(t)$. By using
Theorem~\ref{theorem:193}, we can assume that $A_i(t),R_i(t),S_i(t)$ are generalized row stochastic for all $i\in\{1,2, \ldots,k\}$ and all $t\in (0,1)$.
Then all matrices are bounded (by 1), so there is a subsequence $t_n\to 1$ such that, for each $i\in\{1,2, \ldots,k\}$, $A_i(t_n)\to A_i, R_i(t_n)\to
R_i,S_i(t_n)\to S_i$ for some nonnegative matrices $A_i,R_i,S_i$. But then we get a strong shift equivalence over $\mathbb{R}_+$ between $P_0$ and $P_1$ which is a contradiction.
\end{proof}

\begin{question}
  Does Theorem \ref{theorem:191} remain true if the constraint \lq\lq using only matrices with size less than $L$\rq\rq is deleted?
\end{question}

\begin{definition}
A \textit{semialgebraic subset} of $\mathbb{R}^n$ is a subset of points in $\mathbb{R}^n$ which is the solution set of a
boolean combination of polynomial equations and inequalities with real coefficients.
\end{definition}

In contrast to Theorem \ref{theorem:191}, we have the following results.

\begin{theorem}\label{sseclasses}
Let $A$ be an $n\times n$ primitive matrix with positive trace. The collection of positive $n\times n$ matrices similar over $\mathbb{R}$ to $A$ contains only finitely many SSE$-\mathbb{R}_+$ classes.
\begin{proof}
Let $\mathcal{M}_+(A)=\{X \mid X \text{ is } n\times n,\text{ positive and similar to } A\}$. We will prove that $\mathcal{M}_+(A)$ contains only finitely many connected components. It suffices to prove that $\mathcal{M}_+(A)$ is a semialgebraic set since it is well-known that a semialgebraic set has finitely many connected components \cite[Theorem 2.4.4]{bochnak1998real}. Let $p_A(t)$ be the characteristic polynomial of $A$. Suppose that $p_A(t)=\prod_{k=1}^m (q_k(t))^{j_k}$ where the $q_k$ are irreducible and distinct, and $j_k\in\mathbb{N}$ for all $k\in\{1,2,\ldots,m\}$. Then $X\in\mathcal{M}_+(A)$ if and only if
\begin{enumerate}
  \item $x_{ij}>0$ for all $i,j\in\{1,2,\ldots,n\}$, and
  \item $\text{rank}(q_k(X))^{j}=\text{rank}(q_k(A))^{j}$ for all $k\in\{1,2,\ldots,m\}$ and $j\in\{1,2,\ldots,j_k\}$.
\end{enumerate}
That a matrix $M$ has a given rank $r$ is equivalent to $r\times r$ being the size of the largest submatrix of $M$ with nonzero determinant. This is a semialgebraic condition on $M$. Thus $\mathcal{M}_+(A)$ is semialgebraic. By Theorem \ref{theorem:1}, $\mathcal{M}_+(A)$ contains only finitely many SSE$-\mathbb{R}_+$ classes.
\end{proof}
\end{theorem}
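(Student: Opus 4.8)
The plan is to realize the set of positive matrices similar to $A$ as a semialgebraic subset of $\mathbb{R}^{n^2}$, invoke the fact that a semialgebraic set has only finitely many connected components, and then use Theorem~\ref{theorem:1} to show that each component lies in a single SSE$-\mathbb{R}_+$ class. Concretely, I would set $\mathcal{M}_+(A)=\{X\mid X\text{ is }n\times n,\text{ positive, and similar over }\mathbb{R}\text{ to }A\}\subseteq\mathbb{R}^{n^2}$. The positivity constraints $x_{ij}>0$ are open polynomial inequalities, hence already semialgebraic, so the real work is to encode the relation ``$X$ is similar to $A$'' by polynomial equalities and inequalities in the entries of $X$.

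The key step is to replace similarity by rank data. Factor the characteristic polynomial of $A$ as $p_A(t)=\prod_{k=1}^m q_k(t)^{j_k}$ with the $q_k$ distinct and irreducible over $\mathbb{R}$, and set $d_k=\deg q_k$. The rational canonical form shows that the elementary divisors of a matrix $X$ that are powers of $q_k$ are completely determined by the integers $\dim\ker q_k(X)^{j}$, equivalently by $\operatorname{rank}q_k(X)^{j}$, for $j=1,2,\ldots$. I would therefore claim that $X\in\mathcal{M}_+(A)$ precisely when $X$ is positive and $\operatorname{rank}q_k(X)^{j}=\operatorname{rank}q_k(A)^{j}$ for all $k$ and all $1\le j\le j_k$. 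One direction is immediate; the delicate point in the converse is that matching these ranks only for the finite list $q_1,\dots,q_m$ should already force $X$ to have the same characteristic polynomial as $A$. This falls out of a dimension count: the equation at $j=j_k$ gives $\dim\ker q_k(X)^{j_k}=d_kj_k$, the $q_k$-primary subspaces of $X$ lie in direct sum, and $\sum_k d_kj_k=n$ forces those subspaces to exhaust $\mathbb{R}^n$ and to coincide with the kernels in question; the remaining ranks then pin down the block partition attached to each $q_k$, so $X$ and $A$ have identical elementary divisors and are similar.

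Each entry of $q_k(X)^{j}$ is a polynomial in the entries of $X$, and the assertion ``$\operatorname{rank}M=r$'' is semialgebraic, since it says that some $r\times r$ minor of $M$ is nonzero while every $(r+1)\times(r+1)$ minor vanishes. Hence each rank condition is semialgebraic, and $\mathcal{M}_+(A)$, being a finite intersection of semialgebraic sets, is semialgebraic. By the standard structure theory \cite{bochnak1998real}, $\mathcal{M}_+(A)$ then has only finitely many connected components, each of which is moreover semialgebraically path-connected.

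Finally I would assemble the pieces. Any two matrices in the same connected component of $\mathcal{M}_+(A)$ can be joined by a continuous path lying entirely in $\mathcal{M}_+(A)$, that is, a path of positive matrices all similar over $\mathbb{R}$ to $A$; by Theorem~\ref{theorem:1}, applied with $\mathbb{U}=\mathbb{R}$, these two matrices are SSE$-\mathbb{R}_+$. Thus each component is contained in a single SSE$-\mathbb{R}_+$ class, and since there are finitely many components there are finitely many classes. I expect the main obstacle to be exactly the converse half of the rank characterization: confirming that tracking only the finitely many ranks tied to $q_1,\dots,q_m$ genuinely captures similarity and cannot slip in a matrix with a different characteristic polynomial. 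The direct-sum dimension count sketched above is what I would rely on to close that gap.
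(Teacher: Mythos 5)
Your proposal follows exactly the paper's argument: encode $\mathcal{M}_+(A)$ as a semialgebraic set via positivity and the rank conditions $\operatorname{rank}\bigl(q_k(X)^{j}\bigr)=\operatorname{rank}\bigl(q_k(A)^{j}\bigr)$, invoke finiteness of connected components of semialgebraic sets, and apply Theorem~\ref{theorem:1} to paths within a component. The only difference is that you spell out the primary-decomposition dimension count justifying the converse of the rank characterization, which the paper asserts without proof; your justification is correct.
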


\begin{remark}
  Theorem \ref{sseclasses} (taken from \cite{chuysurichay2011positive}) was generalized to any positive matrix over a dense subring of $\mathbb{R}$ in \cite[Theorem 5.16]{boyle2013path}.
\end{remark}

\begin{theorem}\label{seclasses}
Let $A$ be an $n\times n$ primitive matrix with positive trace. The collection of positive $n\times n$ matrices SE$-\mathbb{R}_+$ to $A$ contains only finitely many SSE$-\mathbb{R}_+$ classes.
\begin{proof}
Let $\text{SE}(n,A)=\{X \mid X \text{ is } n\times n,\text{ positive and SE}-\mathbb{R}_+ \text{ to } A \}$. Since SE$-\mathbb{R}_+$ preserves the Jordan form away from zero \cite[Theorem 7.4.6]{LM1995}, any $n\times n$ positive matrix which is SE$-\mathbb{R}_+$ to $A$ must have $J^{\times}(A)$ as its Jordan form away from zero. There are only finitely many Jordan forms with the same $J^{\times}(A)$. Each Jordan type contributes only a finite number of SSE$-\mathbb{R}_+$ classes by Theorem \ref{sseclasses}. Therefore, $\text{SE}(n,A)$ contains finitely many SSE$-\mathbb{R}_+$.
\end{proof}
\end{theorem}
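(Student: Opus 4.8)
The plan is to reduce the claim to Theorem~\ref{sseclasses} by stratifying the set $\mathrm{SE}(n,A)$ according to the similarity class over $\mathbb{R}$ of its elements, then showing that only finitely many strata occur and that each contributes finitely many classes. The single dynamical input I would use is that SE$-\mathbb{R}_+$ preserves the Jordan form away from zero \cite[Theorem 7.4.6]{LM1995}: every positive $n\times n$ matrix $X$ that is SE$-\mathbb{R}_+$ to $A$ has $J^{\times}(X)=J^{\times}(A)$, so the nonzero eigenvalues of $X$ and their Jordan block structure are completely pinned down.

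First I would bound the number of strata. Let $r$ be the size of $J^{\times}(A)$. Any $n\times n$ real matrix whose Jordan form away from zero equals $J^{\times}(A)$ is determined up to similarity over $\mathbb{R}$ (equivalently over $\mathbb{C}$) by its remaining nilpotent part, that is, by the sizes of its Jordan blocks for the eigenvalue $0$; these sizes form a partition of $n-r$. Since there are only finitely many partitions of $n-r$, the elements of $\mathrm{SE}(n,A)$ fall into only finitely many similarity classes over $\mathbb{R}$.

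The step that requires care is applying Theorem~\ref{sseclasses} to a stratum whose matrices carry a zero eigenvalue, since that theorem is stated for a reference matrix that is primitive of positive trace, and a nilpotent part destroys both properties at the level of a Jordan normal form. The resolution is to apply Theorem~\ref{sseclasses} not to a Jordan form but to a positive representative of the stratum: if a given similarity class meets $\mathrm{SE}(n,A)$, I would pick any positive matrix $B$ in that class. Every positive matrix is primitive, because its first power is already positive, and has positive trace, because its diagonal entries are positive; hence $B$ satisfies the hypotheses of Theorem~\ref{sseclasses}. The positive matrices similar over $\mathbb{R}$ to $B$ are exactly the positive matrices in that similarity class, so by Theorem~\ref{sseclasses} the stratum contributes only finitely many SSE$-\mathbb{R}_+$ classes.

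Assembling the pieces, $\mathrm{SE}(n,A)$ is a finite union of strata, each contributing finitely many SSE$-\mathbb{R}_+$ classes, so $\mathrm{SE}(n,A)$ contains only finitely many such classes. I expect the only genuine obstacle to be the bookkeeping around the hypotheses of Theorem~\ref{sseclasses}; once one observes that positivity automatically forces primitivity and positive trace, so that the theorem may be invoked through a positive representative of each occurring similarity class, the remainder is a routine finite partition argument.
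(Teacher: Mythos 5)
Your proof is correct and follows essentially the same route as the paper's: both use the fact that SE$-\mathbb{R}_+$ preserves the Jordan form away from zero to split $\mathrm{SE}(n,A)$ into finitely many Jordan-type strata, then invoke Theorem~\ref{sseclasses} on each stratum. Your explicit observation that Theorem~\ref{sseclasses} must be applied through a positive representative of each stratum (which is automatically primitive with positive trace) carefully fills in a step the paper leaves implicit, but the argument is the same.
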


\section{Acknowledgements}
This paper includes results from the doctoral dissertation of the author \cite{chuysurichay2011positive}, completed at the University of
 Maryland, College Park under the supervision of Mike Boyle.
\bibliographystyle{amsplain}
\bibliography{SSEarxiv}
\end{document}